\newtheorem{definition}{Definition}
\newtheorem{hip}{Hipótese}
\newtheorem{prop}{Proposition}
\newtheorem{theorem}{Theorem}
\newtheorem{lemma}{Lemma}
\newtheorem{obs}{Observation}
\newcommand{\R}{\mathbb{R}}
\begin{document}

\title{ \Large{\bf Consistent Approximations to Impulsive Optimal Control Problems}}

\author{ {\bf {\large \underline{Daniella Porto}}}, \hspace*{1cm} {\bf {\large
 Geraldo Nunes Silva}}, \hspace*{1cm} {\bf Helo\'isa Helena Marino Silva}  \\
 {\small Depto. de Matemática Aplicada, IBILCE, UNESP,} \\
 {\small 15054-000, S\~ao Jos\'e do Rio Preto, SP} \\
 {\small E-mail: danielinha.dani@gmail.com, \hspace*{.2cm}  gnsilva03@gmail.com, \hspace*{.2cm} hsilva@ibilce.unesp.br}}

\date{}

\maketitle

\thispagestyle{empty}

\noindent{\bf Abstract:} {\it We analyse the theory of consistent approximations given by Polak in \cite{Polak}, and we use it in an impulsive optimal control problem. We reparametrize the original system and build consistent approximations for this new reparametrized problem. So, we prove that if a sequence of solution of the consistent approximations is converging, it will converge to a solution of the reparametrized problem, and, finally, we show that from a solution of the reparametrized problem we can find a solution of the original one.}

\section{Introduction}

In this paper, we study impulsive optimal control problems in which we apply a theory called consistent approximations. This theory was introduced in \cite{El1992}, \cite{Polak}, and uses approximated problems of finite dimension. From an infinite dimension problem $(P)$, we can build a sequence of problems $(P_N)$, where these problems have finite dimension and epi-converge (convergence between the epigraphs) to $(P)$. This convergence ensures that every sequence of global or local minimum of $(P_N)$ that converges, will converge to a global or local minimum of $(P)$, respectively. It is necessary to use optimality functions to represent the first-order necessary conditions because for optimal control with state and control constraints, that are complex, it is easier to work with optimality functions than with classical forms as first-order necessary condition. In \cite{Polak} is given an application of this theory by using an optimal control problem.

There exist many papers where impulsive control systems are studied, we cite, for example, \cite{SV:1996}, \cite{Peter}. The article \cite{SV:1996} shows that the solution set of an impulsive system, given by differential inclusion, is weakly$^{*}$ closed and the article \cite{Peter} builds a numerical approximation for the impulsive system, also given by differential inclusion, using the Euler's discretization. It is shown that there is a subsequence of the solution sequence obtained by this discretization that graph-converges to a solution of the original system. In this paper we propose an approximation by absolutely continuous measures, using the convergence of graph-measure.

There exist a great number of papers that talk about impulsive optimal control problems where the control systems involve measures, and that discuss theoretical conditions for optimality of control process. We can cite, for example, \cite{karamzin}, \cite{K:2005}, \cite{motta_caterina:2011}, \cite{PS:2000}, \cite{SLRB:2000}, \cite{SR:1996}, \cite{SV:1997}. On the other hand, the literature about numerical methods for impulsive optimal control problems is rather scarce, \cite{MR1816510}. In this paper, a space-time reparametrization of the impulsive problem is adopted, an approximation scheme for that augmented system is construct and it is proved that such approximation converge to the value function of the augmented problem. Finally, the sequence of discrete optimal controls converge to an optimal control for the continuous problem.

Regarding usual optimal control problems, there are some works that aim to solve them using discrete approximation by Euler \cite{DH:2001}, \cite{DH:2000c}, or Runge Kutta \cite{DH:2000RK}, \cite{WH:2000b}, \cite{WH:2000}. The scheme used is 1) discretize the optimal control problem and 2) solve the resulting nonlinear optimization problem. The choice of the method of resolution depends on the structure of the optimal control problem and personal preferences. Among the several proposals for solutions of nonlinear optimization problems arising from discretization, we cite some more recent \cite{KM:2010} and \cite{KM:2007}.

This work aims to contribute with the presentation of the Euler's method application for impulsive optimal control problems. We show that an impulsive optimal control problem can be reparametrized and discretized by Euler's method to generate a subsequence of optimal trajectories of Euler that converge to an optimal trajectory of the reparametrized problem, using an appropriate metric. From that we can find the optimal solution to the continuous problem. We are given a generalization of valid results for non impulsive optimal control problems, \cite{DH:2001}.

In Section 2 we introduce the theory of consistent approximations. We define the impulsive system and get the reparametrized system in Section 3. In Section 4 is defined the impulsive optimal control problem. We establish the approximated problems to our reparametrized optimal control problem in Section 5, and finally, the consistent approximations is given in Section 6. We also show the convergence between a sequence of global or local minimum to the approximated problems and the local or global minimum to the reparametrized problem in the same section. Section 7 has bounds on approximations errors and Section 8 conclusions.

\section{Theory of Consistent Approximations}\label{2.1}
Let $B$ be a normed space. Consider the problem
$$
(P) \;\;\;\; \min_{x \in S_C} f(x),
$$
where $f:B\rightarrow \mathbb{R}$ is continuous and $S_C \subset B$.

Let $\mathcal{N}$ be an infinite subset of $\mathbb{N}$ and $\{S_N\}_{N \in \mathcal{N}}$ be a family of finite dimension subspaces of $B$ such that $S_{N_1} \subset S_{N_2}$ if $N_1 < N_2$ and
$\cup S_N$ is dense in $B$. For all $N \in \mathcal{N}$, let $f_N: S_N \rightarrow \mathbb{R}$ be a continuous function that approximates $f(\cdot)$ over $S_N$, and let $S_{C,N} \subset S_N$ be an approximation of $S_C$. Consider the approximated problems family
$$
(P_N) \;\;\;\; \min_{x \in S_{C,N}} f_N(x), \;\; N \in \mathcal{N}.
$$

Define the epigraphs associated to $(P)$ and $(P_N)$, respectively, as
$$
E:=\{(x,r): x \in S_C, f(x)\leq r\}
$$
and 
$$
E_N:=\{(x,r): x \in S_{C,N}, f_N(x)\leq r\}.
$$
Note that the problems above can be rewritten like
$$
(P) \;\; \min_{(x,r) \in E}r \quad\quad\qquad \qquad (P_N) \;\; \min_{(x,r) \in E_N}r, \; N \in \mathcal{N}.
$$
Note that the unique difference between $(P_N)$ and $(P)$ are the epigraphs of them and if  $E_N$ converges to $E$, in the sense of ``Kuratowski'', we have a convergence between the problems. In Theorem 3.3.2, \cite{Polak}, the epigraph convergence described is equivalent to the items a) and b) of the next definition about consistent approximations.

\begin{definition}
Let the functions $f(\cdot)$ and $f_N(\cdot)$ and the sets $B$, $S_C$, $S_N$ and $S_{C,N}$ be defined as above.\\
i) We say $P_N$ epi-converge to $P$ ($P_N \rightarrow^{Epi} P$) if:\\
a) For all $x \in S_C$ there exists a sequence $\{x_N\}_{N \in \mathcal{N}}$, with $x_N \in S_{C,N}$, such that $x_N \rightarrow^{\mathcal{N}} x$, with $N\rightarrow \infty$, and $\overline{\lim}_{N \rightarrow \infty}f_N(x_N)\leq f(x)$;\\
b) For all infinite sequence $\{x_N\}_{N \in \mathcal{K}}$, $\mathcal{K} \subset \mathcal{N}$, such that $x_N \in S_{C,N}$, for all $N \in \mathcal{K}$, and $x_N\stackrel{\mathcal{K}}{\rightarrow} x$, with $N \rightarrow \infty$, then $x \in S_C$ and $\underline{\lim}_{N \in \mathcal{K}}f_N(x_N)\geq f(x)$, as $N \rightarrow \infty$.\\
ii) We say the upper semicontinuous functions $\gamma_N: S_{C,N} \rightarrow \R$ are optimality functions for the problems $(P_N)$ if $\gamma_N(\eta) \leq 0$, $\forall \; \eta \in S_{C,N}$ and if $\hat{\eta}_N$ is a local minimizer of $(P_N)$ then $\gamma_N(\hat{\eta}_N)=0$. We can define the optimality function $\gamma:S_C\rightarrow \R$ for $(P)$ in the same way.\\
iii) The pairs $(P_N,\gamma_N)$ of the sequence $\{(P_N,\gamma_N)\}_{N \in \mathcal{N}}$ are consistent approximations to the pair $(P,\gamma)$ if $P_N$ epi-converge to $P$ and for all sequence $\{x_N\}_{N \in \mathcal{N}}$ where $x_N \in S_{C,N}$ and $x_N \rightarrow x \in S_C$ we have $\overline{\lim}_{N \rightarrow \infty}\gamma_N(x_N) \leq \gamma(x)$.
\end{definition}

The key of that epigraph convergence is given by Theorem 3.3.3, \cite{Polak}, where it is shown that if $(P_N)$ epi-converges to $(P)$ and if $\{x_N\}_{N \in \mathcal{N}}$ is a sequence of local or global solutions to $(P_N)$ so that $x_N$ converges to $x$, then $x$ is a local or global minimizer of $(P)$ and $f_N(x_N)$ converge to $f(x)$, $N\rightarrow \infty, N \in \mathcal{N}$.

It is necessary to define the optimality functions since the epi-convergence alone can not guarantee that the sequence of stationary points of $(P_N)$ converges to a stationary point of $(P)$, as we can observe in an example given by \cite{Polak}, page 397.

\section{The Impulsive System}\label{2.2}
Before we define the impulsive optimal control problem we define the impulsive system that is related to it and show some results given by \cite{Peter}.

Consider the impulsive system
\begin{equation}\label{eq}
\begin{cases}
dx =  f(x,u)dt + g(x)d\Omega, t\in [0,T],\\
x(0)=\xi^0 \in \mathcal{C},
\end{cases}
\end{equation}
where $f:\mathbb{R}^n \times \mathbb{R}^m \rightarrow \mathbb{R}^n$ is linear in $u$, $g:\mathbb{R}^n\rightarrow \mathcal{M}_{n \times q}$, where $\mathcal{M}_{n \times q}$ is the space of $n \times q$ matrices whose entries are real, $\mathcal{C}\subset \mathbb{R}^n$ is closed and convex, the function $u:[0,T] \rightarrow
\mathbb{R}^m$ is Borel measurable and essentially bounded, $\Omega := (\mu,|\nu|,\psi_{t_i})$ is the impulsive
control, where the first component $\mu$ is a vectorial Borel measure with range in a convex, closed cone $K\subset
\mathbb{R}^q_{+}$. The second component is such that there exists $\mu_N:[0,T] \rightarrow K$ so that $(\mu_N,|\mu_N|) \rightarrow^{*} (\mu,\nu)$. As $K\subset \R^q_{+}$ we must have $\nu=|\mu|$. The  functions $\psi_{t_i}:[0,1]\rightarrow K$ are associated to the measure atoms, that is, $\{\psi_{t_i}\}_{i \in \mathcal{I}}$ where $\mathcal{I}$ is the set of atomic index of the measure $\mu$ and we define $\Theta :=\{t_i\in[0,T]: \mu(\{t_i\})\neq 0\}$, where $\mu(t)$ is the vectorial value of the measure in $K$. The functions $\psi_{t_i}$ are measurable, essentially bounded and satisfy
\[\begin{array}{ll}
\mbox{i)} & \sum_{j=1}^q |\psi^j_{t_i}(\sigma)| = |\mu|(t_i) ~\mbox{a.e. } \sigma \in [0,1];\\ \mbox{ii)} & \int_{0}^{1}
\psi^j_{t_i}(s) ds = \mu^j(t_i), \quad j=1,2,\ldots,q,
\end{array}\]
for all $t_i \in \Theta$.

The functions $\psi_{t_i}(\cdot)$ give us information about the measure $\mu$ during the atomic time $t_i \in \Theta$.

\subsection{The Reparametrized Problem}
We obtain a reparametrized problem that is approximated by using the consistent approximations. This can be done, without loss of information, due to the theorem \ref{teo9} that is enunciated in this subsection and was proved in \cite{wol}. It says that the reparametrized problem and the original problem have equivalents solutions, up to reparametrization. For more information about it see \cite{SV:1996}, \cite{Z:2005}, \cite{wol}, \cite{Maso}.

Firstly, we study the impulsive system given by \eqref{eq}. For this, let $\Omega=(\mu,\nu,\{\psi_{t_i}\}_{t_i\in \Theta})$ be an impulsive control and $\xi^0 \in \mathbb{R}^n$ an arbitrary vector. Denote by $\mathcal{X}_{t_i}(\cdot;\xi^0)$ the solution to the system
$$
\begin{cases}
\dot{\mathcal{X}}_{t_i}(s)=g(\mathcal{X}_{t_i}(s))\psi_{t_i}(s), \; s \in [0,1],\\
\mathcal{X}_{t_i}(0)=\xi^0.
\end{cases}
$$

Consider
\begin{equation}\label{traj}
x_{\vartheta}:=(x(\cdot),\{\mathcal{X}_{t_i}(\cdot)\}_{t_i \in \Theta}),
\end{equation}
where $\vartheta:=(u,\Omega)$, $x(\cdot): [0,T]\rightarrow \mathbb{R}^n$ is a function of bounded variation with the discontinuity points in the set $\Theta$ and $\{\mathcal{X}_{t_i}(\cdot)\}_{t_i \in \Theta}$ is the collection of Lipschitz functions defined above. The definition of solution of the system \eqref{eq} is given in the sequence.

\begin{definition}\label{def2}
We say that $x_{\vartheta}$ is a solution of \eqref{eq} if
$$
x(t) = \xi^0 + \int_0^t f(x,u)d\sigma + \int_{[0,t]} g(x) d\mu_{c}+\sum_{t_i\leq t} [\mathcal{X}_{t_i}(1) - x(t_i-)] \; \forall t\in [0,T],
$$
where $\mu_c$ is the continuous component of $\mu$ and $x(t_i-)$ is the left-hand limit of $x(\cdot)$ in $t_i$.
\end{definition}

Now we do a time reparametrization and get a reparametrized system whose solution is equivalent to the solution of the original system \eqref{eq}, up to reparametrization. For this, define
$$
\pi(t):=\frac{t+|\mu|([0,t])}{T+\|\mu\|},\; t \in ]0,T],\; \pi(0-)=0.
$$
The last equality is a convention because $0$ can be an atom of $\mu$.

Then, there exists $\theta:[0,1] \rightarrow [0,T]$ such that
\begin{itemize}
\item $\theta(s)$ is non-decreasing;
\item $\theta(s)=t_i \;\; \forall t_i \in \Theta, \forall s \in I_i$, where $I_i=[\pi(t_i-),\pi(t_i)]$.
\end{itemize}

We define by $F(t;\mu):=\mu([0,t])$ if $t \in ]0,T]$, and $F(0;\mu)=0$ the distribution function of the measure $\mu$.

Let $\phi:[0,1] \rightarrow \mathbb{R}^q$ be given by
$$
\phi(s):=\begin{cases}
F(\theta(s);\mu) \; \mbox{if} \; s \in [0,1]\backslash(\cup_{i \in \mathcal{I}}I_i),\\
F(\theta(s);\mu) + \int_{[\pi(t_i-),s]}\frac{1}{\pi(t_i)-\pi(t_i-)}\psi_{t_i}(\alpha_{t_i}(\sigma))d\sigma \; \mbox{if} \; s \in I_i,
\end{cases}
$$
where $\alpha_{t_i}:[\pi(t_i-),\pi(t_i)]\rightarrow [0,1]$ is given by $\alpha_{t_i}(\sigma)=(\sigma-\pi(t_i-))/(\pi(t_i)-\pi(t_i-))$.

According to \cite{Maso}, $\theta(\cdot)$ and $\phi(\cdot)$ are Lipschitz. The Lipschitz constants are given by $b$ and $r$, respectively, and, furthermore, $\theta(\cdot)$ is absolutely continuous. We say $(\theta,\phi)$ is the {\bf graph completion} of the measure $\mu$.

With all the tools on hands we define a reparametrized solution of the system\eqref{eq}.

\begin{definition}\label{def1}
Let
\begin{equation}\label{defy}
y(s):=\begin{cases}
 x(\theta(s)) \qquad \mbox{if}\; s \in [0,1]\setminus\left(\cup_{i \in \mathcal{I}}I_i\right),\\
  \mathcal{X}_{t_i}(\alpha_{t_i}(s)) \;\; \mbox{if} \; s \in I_i, \; \mbox{for some} \; i \in \mathcal{I}.
      \end{cases}
\end{equation}
Then $y_{\vartheta}:=y$ is a reparametrized solution of \eqref{eq} since $y(\cdot)$ is Lipschitz in $[0,1]$ and satisfies
\begin{equation}\label{defy.}
\begin{cases}
\dot{y}(s) = f(y(s),u(\theta(s)))\dot{\theta}(s) + g(y(s))\dot{\phi}(s)\; \mbox{a.e}.\; s \in [0,1],\\
y(0)=\xi^0.
\end{cases}
\end{equation}
\end{definition}

The next theorem is proved in \cite{wol}.

\begin{theorem}\label{teo9}
Suppose that the impulsive control $\Omega$ is given and $x_{\vartheta}$ is as defined in $\eqref{traj}$. Then, $y_{\vartheta}$ is a reparametrized solution of $\eqref{eq}$ if and only if $x_{\vartheta}$ is a solution of $\eqref{eq}$.
\end{theorem}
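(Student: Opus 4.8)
The plan is to prove the equivalence by unwinding the two notions of solution through the reparametrization maps $\theta$ and $\phi$, using the change-of-variables formula for Lebesgue--Stieltjes integrals. The argument is essentially a careful bookkeeping of how the continuous part $\mu_c$ of the measure and the atoms $\{t_i\}_{i\in\mathcal{I}}$ are each transported under the graph completion $(\theta,\phi)$.

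First I would establish the forward implication: assume $x_{\vartheta}$ is a solution of \eqref{eq} in the sense of Definition \ref{def2}, and show that $y$ defined by \eqref{defy} satisfies \eqref{defy.}. On an interval $[0,1]\setminus(\cup_i I_i)$ where $\theta$ is absolutely continuous and $g(y(s))\dot\phi(s) = g(x(\theta(s)))\,\tfrac{d}{ds}F(\theta(s);\mu)$, differentiating the integral identity for $x$ composed with $\theta$ gives the claimed ODE, since on these ``non-atomic'' parameter values the jump term $\sum_{t_i\le\theta(s)}[\mathcal{X}_{t_i}(1)-x(t_i-)]$ is locally constant and $\mu$ has no atom at $\theta(s)$. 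On an atomic interval $I_i$, the rescaling $\alpha_{t_i}$ converts $y(s)=\mathcal{X}_{t_i}(\alpha_{t_i}(s))$ together with the defining ODE $\dot{\mathcal{X}}_{t_i}=g(\mathcal{X}_{t_i})\psi_{t_i}$ into $\dot y(s) = g(y(s))\,\dot\phi(s)$, while $\dot\theta(s)=0$ on $I_i$, so \eqref{defy.} holds there too. One must also check the Lipschitz regularity of $y$, which follows from the Lipschitz bounds $b$ and $r$ on $\theta$ and $\phi$ and boundedness of $f$, $g$ along the (bounded-variation, hence bounded) trajectory, plus matching of one-sided limits of $y$ at the endpoints of each $I_i$ --- precisely the conditions $\mathcal{X}_{t_i}(0)=x(t_i-)$ and $\mathcal{X}_{t_i}(1)=x(t_i+)$ that make $y$ continuous.

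For the converse, assume $y$ is Lipschitz and solves \eqref{defy.}; I would define $x(t):=y(\pi(t))$ for $t\notin\Theta$ (choosing the appropriate endpoint of $I_i$ for $t=t_i$) and $\mathcal{X}_{t_i}(\tau):=y(\pi(t_i-)+\tau(\pi(t_i)-\pi(t_i-)))$, then integrate \eqref{defy.} over $[0,\pi(t)]$ and split the integral over the non-atomic part and the atomic subintervals $I_i$ with $t_i\le t$. The non-atomic part reproduces $\int_0^t f(x,u)\,d\sigma + \int_{[0,t]}g(x)\,d\mu_c$ via the inverse change of variables $s=\pi(\sigma)$ (using that $\dot\theta(s)\,ds$ pushes forward to $d\sigma$ on this set and $\dot\phi(s)\,ds$ to $d\mu_c$), while each atomic block contributes exactly $\int_0^1 g(\mathcal{X}_{t_i})\psi_{t_i} = \mathcal{X}_{t_i}(1)-\mathcal{X}_{t_i}(0) = \mathcal{X}_{t_i}(1)-x(t_i-)$; summing gives Definition \ref{def2}.

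The main obstacle I anticipate is the change-of-variables step in both directions at the interface between the continuous and atomic parts of $\mu$: one has to verify that the decomposition of $[0,1]$ into $\cup_i I_i$ and its complement corresponds correctly, under $\theta$, to the decomposition of $\mu$ into atomic and continuous parts, and that $\dot\phi$ restricted to the non-atomic parameter set is genuinely the Radon--Nikodym derivative of $F(\cdot;\mu_c)\circ\theta$. This is exactly the content attributed to \cite{Maso} and \cite{wol}, so I would lean on those references for the measure-theoretic identities $\dot\theta(s)+|\dot\phi(s)|$-type normalizations and for the fact that $\theta$ maps the complement of $\cup_i I_i$ onto $[0,T]$ in a way that is essentially injective off a null set; the remaining work is the routine verification sketched above.
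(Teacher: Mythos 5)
The paper does not actually prove Theorem \ref{teo9}: it states the result and defers the proof entirely to \cite{wol}, so there is no in-paper argument to compare yours against. Your sketch is the standard graph-completion argument from that literature (Dal Maso--Rampazzo, Wolenski--\v{Z}abi\'c), and its outline is correct: the forward direction splits $[0,1]$ into the atomic intervals $I_i$, where $\dot\theta\equiv 0$ and the rescaled $\psi_{t_i}$ produces $\dot y=g(y)\dot\phi$, and their complement, where differentiating $x\circ\theta$ yields the full equation \eqref{defy.}; the converse integrates \eqref{defy.} over $[0,\pi(t)]$ and pushes the two pieces back to $\int_0^t f\,d\sigma+\int_{[0,t]}g\,d\mu_c$ and to the jump terms $\mathcal{X}_{t_i}(1)-x(t_i-)$. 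The continuity-matching conditions $\mathcal{X}_{t_i}(0)=x(t_i-)$ and $\mathcal{X}_{t_i}(1)=x(t_i)$ are correctly identified as what glues $y$ together across the endpoints of each $I_i$.

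One point deserves more care than your sketch gives it. You describe the complement of $\cup_i I_i$ as the ``non-atomic part'' on which $s=\pi(\sigma)$ acts as an ordinary change of variables, but $\mu_c$ may itself contain a component singular with respect to Lebesgue measure; on the corresponding parameter set one has $\dot\theta=0$ while $\dot\phi\neq 0$, even though that set lies outside every $I_i$, and $x$ fails to be absolutely continuous there. So the identity $\dot\phi(s)=\tfrac{d}{ds}F(\theta(s);\mu_c)$ a.e.\ off $\cup_i I_i$, and the corresponding substitution in the converse direction, are not a routine chain rule; they are precisely the nontrivial content of the graph-completion construction in \cite{Maso}. You flag this as the main obstacle and lean on the references for it, which is legitimate for a sketch, but be aware that this is where essentially all of the work of a self-contained proof would sit; as written, your argument is a correct reduction of the theorem to those cited measure-theoretic facts rather than an independent proof of them.
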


\section{The Impulsive Optimal Control Problem}\label{2.3}

We need to describe the constrains on the control $u$. We are following \cite{Polak}. For this, denote by $L_2^m[0,T]$ the set of all functions defined from $[0,T]$ into $\mathbb{R}^m$ that have integrable square.

Let $\beta_{\max} \in \; (0,+\infty)$ be such that every control belongs to the ball $B(0,\beta_{\max}):=\{u \in \mathbb{R}^m; \|u\|_{\infty}\leq\beta_{\max}\}$.

Define 
$$
\hat{\mathcal{U}}:=\{u \in L_{\infty,2}^m[0,T]; \|u\|_{\infty} \leq \omega\beta_{\max}\},
$$
where $\omega \in \; (0,1)$ and $L_{\infty,2}^m[0,T]$ is the set of all functions defined from $[0,T]$ into $\mathbb{R}^m$ that are essentially bounded and the $L_2$ norm is considered over it.

Now, we define the set of constraints of the control $u$ by
$$
\mathcal{U}:=\{u \in \hat{\mathcal{U}}; u(t) \in \bar{\mathcal{U}} \subset B(0,\omega\beta_{\max}) \;\; \mbox{a.e.} \;\; t \in [0,T]\},
$$
where $\bar{\mathcal{U}} \subset \mathbb{R}^m$ is a convex, compact subset of the ball $B(0,\omega\beta_{\max})$.

Consider the impulsive optimal control problem
\[\begin{array}{lllll}
&\min &f^0(x(0),x(T))\\
(P)&&dx =  f(x,u)dt + g(x)d\Omega \; \mbox{a.e.}\; t\in [0,T],\\
&& x(0) \in  \mathcal{C},\; u \in \mathcal{U},\; \mbox{gc}\sup_{t \in [0,T]}|x(t)|\leq L,
\end{array}\]
where $f^0:\mathbb{R}^n \times \mathbb{R}^n \rightarrow \mathbb{R}$ is continuous, $L>0$ is given and the other functions and sets are defined as above. Here,
$$
\mbox{gc}\sup_{t \in [0,T]}|x(t)|=\sup_{s \in [0,1]}|y(s)|.
$$

We need the following assumption.

\begin{hip}\label{assump}
a) The functions $f(\cdot,\cdot)$ and $g(\cdot)$ are $C^1$, and there exist constants $K^{'}, K^{''} \in [1,\infty[$ such that, for all $x, \hat{x} \in \mathbb{R}^n$ and $u, \hat{u} \in B(0,\beta_{\max})$ we have
$$
|f(x,u)-f(\hat{x},\hat{u})|\leq K^{'}[|x-\hat{x}|+|u-\hat{u}|],
$$
$$
\|g(x)-g(\hat{x})\| \leq K^{''}|x-\hat{x}|,
$$
and $f(\cdot,\cdot)$ and $g(\cdot)$ have linear growth, that is, there exists a constant $K_1 < \infty$ so that
$$
|f(x,u)| \leq K_1(1+|x|) \;\; \mbox{e} \;\; \|g(x)\| \leq K_1(1+|x|).
$$
b) The function $f^0(\cdot,\cdot)$ is Lipschitz, has first derivative Lipschitz and is $C^1$ over bounded sets.\\
c) The impulsive system given by
\begin{equation}\label{system1}
\begin{array}{lllll}
&&dx =  f(x,u)dt + g(x)d\Omega \; \mbox{a.e.}\; t\in [0,T],\\
&& x(0)=\xi^0 \in  \mathcal{C},\; u \in \mathcal{U},\; \mbox{gc}\sup_{t \in [0,T]}|x(t)|\leq L,
\end{array}
\end{equation}
where all the variables are like above, is controllable.
\end{hip}

Let $(\xi_0,\xi_1) \in \mathcal{C}\times \R^n$ be arbitrarily chosen. We say an impulsive system like \eqref{system1} is {\bf controllable} if there exist a control $u \in \mathcal{U}$ and an impulsive control $\Omega$ so that the trajectory related to such control $x_{\vartheta}(\cdot)$ satisfies $x(0)=\xi_0$ and $x(T)=\xi_1$ and $\mbox{gc}\sup_{t \in [0,T]}|x(t)|\leq L$.  

Suppose \eqref{system1} is controllable, then if we arbitrarily choose $(\xi_0,\xi_1) \in \mathcal{C}\times \R^n$, there exists a trajectory $x_{\vartheta}(\cdot)$ of \eqref{system1} satisfying $x(0)=\xi_0$ and $x(T)=\xi_1$. We know there exists a solution of the reparametrized system \eqref{defy.} given by $y(\cdot)$ so that $y(0)=\xi_0$ and $y(1)=x_{\vartheta}(T)$, that is, the reparametrized system is controllable. 

We want to get the reparametrized impulsive optimal control problem. For this, it is necessary to define the constraints of the control $u\circ \theta$.

We define the set of constraints of the control $u\circ \theta$ by
$$
\mathcal{U}_C:=\{\hat{u} \in \hat{\mathcal{U}}_1; \hat{u}(s) \in \bar{\mathcal{U}} \subset B(0,\omega\beta_{\max}),\; \mbox{a.e.} \; s \in [0,1]\},
$$
where $\beta_{\max}$,  $\bar{\mathcal{U}}$ and $\omega$ are the same and $\hat{\mathcal{U}}_1:=\{\hat{u} \in L_{\infty,2}^m[0,1];\|\hat{u}\|_{\infty} \leq \omega\beta_{\max}\}$.

Define
$$
\tilde{S}_C:= \mathcal{C} \times \mathcal{U}_C \times \mathcal{P},
$$
whose $\mathcal{U}_C$ is as defined above and $\mathcal{P}$ is the set of all $\Omega:=(\mu,|\nu|,\{\psi_{t_i}\})$ that satisfies the assumptions of the system \eqref{eq}. We also define 
$$
S_C:=\{\eta \in \tilde{S}_C:\sup_{s \in [0,1]}|y^{\eta}(s)|\leq L\}.
$$
We represent by $y^{\eta}(\cdot)$ the solution of the system \eqref{defy.} for each $\eta \in \tilde{S}_C$.

We obtain the following reparametrized problem
$$
(P_{rep}) \; \min_{\eta \in S_C}f^0(y^{\eta}(0),y^{\eta}(1)).
$$

Note that $(P)$ and $(P_{rep})$ have the same solution, up to a reparametrization, because the objective function is the same. So, we will get the consistent approximations for $(P_{rep})$.

The theorem below guarantees that the system \eqref{defy.} has an unique solution.

\begin{theorem}\label{unicidade} Suppose $\eta=(\xi^0,u,\Omega)$ is given, where $\xi^0 \in \mathcal{C}, u \in L_{\infty,2}^m[0,1]$ and $\Omega=(\mu,|\nu|,\psi_{t_i})$ satisfy the assumptions of the system \eqref{eq}. Then, the system defined in \eqref{defy.} has an unique solution.
\end{theorem}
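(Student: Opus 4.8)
The plan is to prove existence and uniqueness by a standard fixed-point / Gr\"onwall argument, but carried out piecewise along the reparametrized time interval $[0,1]$, exploiting the structure of $\theta(\cdot)$ and $\phi(\cdot)$ imposed by the graph completion. First I would fix $\eta=(\xi^0,u,\Omega)$ and recall from the construction preceding Definition~\ref{def1} that $\theta(\cdot)$ is absolutely continuous with $\dot\theta$ bounded (Lipschitz constant $b$) and $\phi(\cdot)$ is Lipschitz with constant $r$; hence $\dot\theta,\dot\phi\in L_\infty[0,1]$. Consequently the right-hand side of \eqref{defy.}, namely $G(s,y):=f(y,u(\theta(s)))\dot\theta(s)+g(y)\dot\phi(s)$, is measurable in $s$ for each fixed $y$, and by Assumption~\ref{assump}a) is Lipschitz in $y$ uniformly in $s$ with an $L_\infty$ Lipschitz modulus $\ell(s):=K'\dot\theta(s)+K''\|\dot\phi(s)\|$, while it also inherits linear growth $|G(s,y)|\le K_1(1+|y|)(\dot\theta(s)+\|\dot\phi(s)\|)$. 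This places us exactly in the Carath\'eodory setting for which the classical existence-uniqueness theorem for the integral equation $y(s)=\xi^0+\int_0^s G(\sigma,y(\sigma))\,d\sigma$ applies.

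Next I would make the fixed-point argument explicit. Define the operator $\mathcal{T}$ on $C([0,1];\mathbb{R}^n)$ by $(\mathcal{T}y)(s)=\xi^0+\int_0^s G(\sigma,y(\sigma))\,d\sigma$; the linear growth bound together with Gr\"onwall's inequality gives an a priori bound on any fixed point, so $\mathcal{T}$ maps a sufficiently large closed ball into itself. For two continuous functions $y_1,y_2$ one estimates
$$
|(\mathcal{T}y_1)(s)-(\mathcal{T}y_2)(s)|\le \int_0^s \ell(\sigma)\,|y_1(\sigma)-y_2(\sigma)|\,d\sigma,
$$
and since $\int_0^1\ell(\sigma)\,d\sigma<\infty$, a standard iteration (or a weighted sup-norm $\|y\|_\lambda:=\sup_s e^{-\lambda\Lambda(s)}|y(s)|$ with $\Lambda(s)=\int_0^s\ell$) makes $\mathcal{T}$ a contraction, yielding a unique continuous solution of the integral equation. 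That solution is then absolutely continuous because $G(\cdot,y(\cdot))\in L_1[0,1]$, and in fact Lipschitz since $\dot\theta,\dot\phi\in L_\infty$ and $y$ is bounded, so it satisfies \eqref{defy.} in the a.e. sense as required by Definition~\ref{def1}.

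For uniqueness in the sense of Definition~\ref{def1} I would observe that any solution $y$ is by definition Lipschitz and satisfies the ODE a.e.; integrating recovers the integral equation, so two solutions $y_1,y_2$ satisfy the displayed inequality above with $y_1(0)=y_2(0)=\xi^0$, and Gr\"onwall forces $y_1\equiv y_2$. I expect the main obstacle to be bookkeeping rather than conceptual: one must be careful that $u\circ\theta$ is genuinely measurable and essentially bounded on $[0,1]$ (this uses that $\theta$ is monotone absolutely continuous and $u$ is Borel measurable and essentially bounded, plus the fact that $\theta$ is constant on each $I_i$ so that composition does not create a non-measurable set), and that the definition of $\phi$ via the $\psi_{t_i}$ on the intervals $I_i$ is consistent with $\dot\phi$ being the claimed $L_\infty$ function. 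Once those measurability and integrability points are checked, the contraction/Gr\"onwall machinery closes the argument with no further difficulty.
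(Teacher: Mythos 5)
Your proposal is correct, and its uniqueness half is essentially the paper's own argument: assuming two solutions, you use the Lipschitz bounds of Assumption~\ref{assump}a) together with $|\dot\theta|\le b$ and $|\dot\phi|\le r$ to obtain the integral inequality with kernel $K'b+K''r$, and Gr\"onwall forces the two solutions to coincide. Where you genuinely diverge is that the paper's proof stops there --- it establishes only uniqueness and leaves existence implicit --- whereas you also supply the existence half, by casting \eqref{defy.} as a Carath\'eodory integral equation with right-hand side $G(s,y)=f(y,u(\theta(s)))\dot\theta(s)+g(y)\dot\phi(s)$ and running a Picard/contraction argument (with the weighted norm $\|y\|_\lambda=\sup_s e^{-\lambda\Lambda(s)}|y(s)|$, $\Lambda(s)=\int_0^s\ell$). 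That extra work buys a complete proof of the theorem as stated (``has a unique solution'' should mean existence \emph{and} uniqueness), and your attention to the measurability of $u\circ\theta$ --- which is unproblematic here precisely because $u$ is Borel measurable and $\theta$ is continuous, so the composition is Borel --- addresses a point the paper passes over silently. The only caveat is that your fixed-point machinery is strictly more than the paper demands of this theorem; if one is content to take existence from the graph-completion construction of Definition~\ref{def1} (where $y$ is built explicitly from $x(\cdot)$ and the $\mathcal{X}_{t_i}$, with Theorem~\ref{teo9} supplying the equivalence), then the Gr\"onwall step alone suffices, which is the paper's economy.
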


\begin{proof}
Suppose that $\eta$ is given and there exist two solutions denoted by $y^{\eta}_1$ and $y^{\eta}_2$. We have
\[\begin{array}{lllll}
\vspace{0.5cm}
|y^{\eta}_1(s)-y^{\eta}_2(s)| \leq \int_0^s\left(K^{'}|y^{\eta}_1(\sigma)-y^{\eta}_2(\sigma)||\dot{\theta}(\sigma)|+K^{''}|y^{\eta}_1(\sigma)-y^{\eta}_2(\sigma)||\dot{\phi}(\sigma)|\right)d\sigma\\
\vspace{0.5cm}
\qquad \qquad \qquad \;\;\leq \int_0^s |y^{\eta}_1(\sigma)-y^{\eta}_2(\sigma)|\left(K^{'}b+K^{''}r\right)d\sigma,\\
\end{array}\]
where in the first inequality we substituted the expressions of $y^{\eta}_1(\cdot)$ and $y^{\eta}_2(\cdot)$ given by the system \eqref{defy}, and then we used the Assumption \ref{assump}. By Bellman-Gronwall's Theorem, $|y^{\eta}_1(s)-y^{\eta}_2(s)|\leq 0$, that is, $y^{\eta}_1\equiv y^{\eta}_2$.
\end{proof}

\section{Approximated Problems}\label{3.1}
We need a metric over the space $S_C$. Consider $\Omega_1=(\mu_1,|\mu_1|,\psi_{t_i}^1)$, $\Omega_2=(\mu_2,|\mu_2|,\psi_{t_i}^2) \in \mathcal{P}$. We need to define a metric in the measure space $\mathcal{P}$. Consider the metric given by
$$
d_3(\Omega_1,\Omega_2)=d_4(\Omega_1,\Omega_2) + d_5(\Omega_1,\Omega_2),
$$
where $d_4(\cdot,\cdot)$ is a metric given by \cite{K:2005},
\[\begin{array}{lll}
\vspace{0.5cm}
d_4(\Omega_1,\Omega_2)=|(\mu_1,|\mu_1|)[0,T]-(\mu_2,|\mu_2|)[0,T]|\\
\vspace{0.5cm}
\qquad \qquad \;+\int_{0}^{T}|F_1(t;(\mu_1,|\mu_1|))-F_2(t;(\mu_2,|\mu_2|))|dt\\
\qquad \qquad \;+\max_{s \in [0,1]}|\phi_1(s)-\phi_2(s)|,
\end{array}\]
and $d_5(\cdot,\cdot)$ is related to the graph-convergence given by \cite{Peter},
$$
d_5(\Omega_1,\Omega_2)=\int_0^1|\dot{\theta}_1(s)-\dot{\theta}_2(s)|ds + \int_0^1|\dot{\phi}_1(s)-\dot{\phi}_2(s)|ds,
$$
with $(\theta_1,\phi_1)$ and $(\theta_2,\phi_2)$ the graph completion of $\mu_1$ and $\mu_2$, respectively.

According to \cite{K:2005}, the space $\mathcal{P}$ with the metric $d_4$ is a metric space, and, furthermore, is the completion of the absolutely continuous measures given over $[0,T]$ in the metric $d_4$.

Note that $S_C \subset \mathbb{R}^n \times L_{\infty,2}^m[0,1] \times \mathcal{P}=: B$. Define the metric $d$ over $B$ as
$$
d=d_1+d_2+d_3,
$$
where $d_3$ is given above and
$$d_1(\xi^0,\xi^1)=|\xi^0-\xi^1|_{\R^n} \;\; \mbox{and} \;\; d_2(u_1,u_2)=\int_0^1|u_1(s)-u_2(s)|^{2}_{\R^m}ds$$.

We want to get consistent approximations to the problem $(P_{rep})$. For this, define the sets
$$
\mathcal{N}:=\{2^k\}_{k=1}^{\infty} \;\; \mbox{and} \;\; S_N:=\mathcal{C}_N \times L_N^{m} \times \mathcal{P}_N \;\; \mbox{for all} \; N \in \mathcal{N},
$$
where $\mathcal{C}_N:=\mathbb{R}^n \;\; \forall$ $N \in \mathcal{N}$,
$$
L_N^m:= \{u_N \in L_{\infty,2}^m[0,1]; u_N(s)= \sum_{k=0}^{N-1}u_k\tau_{N,k}(s)\},
$$
with $u_k \in \mathbb{R}^m$ and
$$
\tau_{N,k}(s):=\begin{cases}
 1 \;\;\; \forall \; s \in [k/N,(k+1)/N[ \;\; \mbox{if}\;\; k\leq N-2,\\
 1 \;\;\; \forall \; s \in [k/N,(k+1)/N] \;\; \mbox{if}\;\; k= N-1,\\
 0 \;\;\; \mbox{otherwise}.
      \end{cases}
$$
and $\mathcal{P}_N$ is given by
$$
\mathcal{P}_N:=\{(\mu_N,|\mu_N|,0): \mu_N([0,t]):=F_N(t)\},
$$
where $|\mu_N|$ is the variation of the measure $\mu_N$, $F_N(0)=0$ and over $]0,T]$,
$$
F_N(t):=\sum_{k=0}^{N-1}\bar{\tau}_{N,k}(t),
$$
$$
\bar{\tau}_{N,k}(t):=\begin{cases}
 b_k+\frac{t-\bar{t}_k}{\bar{t}_{k+1}-\bar{t}_k}(b_{k+1}-b_k)\;\; \forall \; t \in [\bar{t}_k,\bar{t}_{k+1}],\\
 \quad k=0,...,N-1, 0=\bar{t}_0<...<\bar{t}_N=T, \\
 0 \;\;\; \mbox{otherwise},
      \end{cases}
$$
with $b_k \in K$ for all $k=0,...,N-1$. Note that $\mu_N$ is an absolutely continuous measure from $[0,T]$ to $K$ ($K$ is convex) for all $N \in \mathcal{N}$. Furthermore, the graph completion of $\mu_N$ is defined by $\theta_N:[0,1]\rightarrow [0,T]$ and $\phi_N:[0,1]\rightarrow K$ as
$$
\theta_N(s):=\bar{t}_k+\frac{s-s_k}{h}(\bar{t}_{k+1}-\bar{t}_k) \; \mbox{whenever} \; s \in [s_k,s_{k+1}],
$$
$$
\phi_N(s):=F_N\circ\theta_N(s),
$$
where $h=1/N$, $s_k=kh$ and $k=0,...,N-1$, and it satisfies:\\
i) There exists a constant $b>0$ so that $\theta_N(\cdot)$ is Lipschitz of rank $b$ for all $N \in \mathcal{N}$;\\
ii) There exists a constant $r>0$ so that $\lim\sup_{N\rightarrow \infty}\|\dot{\phi}_N(\cdot)\|_{\infty}\leq r$.

Note that $\cup_{N \in \mathcal{N}} \mathcal{C}_N$ is dense in $\mathbb{R}^n$ and $\cup_{N \in \mathcal{N}}L_{N}^m$ is dense in $L_{\infty,2}^m[0,1]$.

Now, define
$$
\tilde{S}_{C,N}:= \tilde{S}_C \cap S_N.
$$

We get some results.

\begin{lemma}\label{lem}
$\cup \mathcal{P}_N$ is dense in $\mathcal{P}$ (endowed with the metric $d_3$).
\end{lemma}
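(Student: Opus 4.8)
The plan is to show that an arbitrary impulsive control $\Omega = (\mu, |\mu|, \{\psi_{t_i}\}) \in \mathcal{P}$ can be approximated, in the metric $d_3 = d_4 + d_5$, by a sequence of absolutely continuous measures $\mu_N \in \mathcal{P}_N$ of the piecewise-affine-distribution-function form prescribed above. Since $\mathcal{P}$ is, by the cited result of \cite{K:2005}, the completion in the $d_4$-metric of the absolutely continuous measures on $[0,T]$, the $d_4$-part of the convergence is essentially already available; the genuine work is to arrange simultaneously that the $d_5$-part — the $L^1$ distance between the derivatives $\dot\theta_N, \dot\phi_N$ of the graph completions and those of $\mu$ — also goes to zero, and to check that the approximating measures can be taken in the specific finite-dimensional family $\mathcal{P}_N$ (piecewise-affine $F_N$ on a dyadic-refining mesh, with zero $\psi$-component).

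First I would fix $\Omega$ and construct $\mu_N$ explicitly: take the mesh $\bar t_k = kT/N$ (or, to capture the atoms, a mesh adapted so that each atom $t_i \in \Theta$ lies at a meshpoint for $N$ large), and let $F_N$ be the continuous piecewise-affine interpolant of the distribution function $F(\cdot;\mu)$ at the meshpoints, so $b_k = F(\bar t_k;\mu) \in K$ (using convexity of $K$ and monotonicity to keep the increments in $K$). Then $\mu_N$ is absolutely continuous with values in $K$, hence lies in $\mathcal{P}_N$. For the $d_4$-estimate: $(\mu_N,|\mu_N|)[0,T] = (\mu,|\mu|)[0,T]$ by construction (the interpolant matches the total mass), the integral term $\int_0^T |F_N - F|\,dt \to 0$ because piecewise-affine interpolants of a BV monotone function converge in $L^1$, and $\max_s |\phi_1(s)-\phi_2(s)|$ is controlled similarly once one compares the reparametrized distribution functions $\phi_N$ and $\phi$ (here the standard change-of-variables between $[0,T]$ and $[0,1]$ via $\pi$ and $\theta$ is used, noting that the atomic pieces $I_i$ shrink as the mesh refines since $\mu_N$ has no atoms).

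The main obstacle is the $d_5$-term, $\int_0^1 |\dot\theta_N - \dot\theta| + \int_0^1 |\dot\phi_N - \dot\phi|$: because $\theta$ is constant on each interval $I_i$ while $\dot\phi$ carries the whole jump $\mu(\{t_i\})$ spread (via $\psi_{t_i}$) over $I_i$, and $\theta_N,\phi_N$ approximate these by affine ramps of comparable total variation but on intervals $[s_k,s_{k+1}]$ that need not coincide with $I_i$, one does \emph{not} get pointwise convergence of the derivatives — only weak-$*$ convergence of $\dot\phi_N\,ds$ to $\dot\phi\,ds$ as measures. So the honest strategy is either (a) to observe that $d_5$ is built precisely so that the graph completions $(\theta_N,\phi_N)$ converge to $(\theta,\phi)$ in the sense of \cite{Peter} — i.e. graph-convergence, which is what $d_5$ metrizes — and then $d_5(\Omega_N,\Omega) \to 0$ follows from the construction of Euler-type graph completions in \cite{Peter}; or (b) to choose the meshpoints to include $\pi(t_i-),\pi(t_i)$ for the finitely many large atoms so that on those intervals $\phi_N$ reproduces the affine profile of $\phi$ exactly, handle the countably many small atoms by a mass-$\varepsilon$ tail argument, and bound the remaining absolutely-continuous part by an $L^1$-approximation of $\dot\phi$ by step functions. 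I would carry out (b): reduce to finitely many atoms by truncating the atomic part, align the mesh to those atoms, and on the complement use that $\dot\phi$ is an $L^1$ (indeed $L^\infty$) function approximable in $L^1$ by the piecewise-constant functions realizable as $\dot\phi_N$, with the same argument giving $\int |\dot\theta_N - \dot\theta| \to 0$ since $\dot\theta = (T+\|\mu\|)\cdot(\text{a.c. part density})$ off $\cup I_i$ and $\theta$ is flat on $\cup I_i$, whose measure tends to $0$.

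Finally I would assemble the pieces: given $\varepsilon > 0$, choose the truncation level and then $N$ large enough that each of the three $d_4$-terms and each of the two $d_5$-terms is below $\varepsilon/5$; this gives $d_3(\Omega_N,\Omega) < \varepsilon$, proving $\mu_N \to \mu$ in $\mathcal{P}$ and hence $\cup_{N}\mathcal{P}_N$ dense in $(\mathcal{P},d_3)$. The one point demanding care is uniformity of the Lipschitz bound $b$ for $\theta_N$ and the asymptotic bound $r$ for $\|\dot\phi_N\|_\infty$ — but these are guaranteed by properties (i) and (ii) recorded just above the lemma, so they pose no additional difficulty.
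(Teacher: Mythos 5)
Your outline is sound, and your option (a) is in fact exactly the route the paper takes: it imports the Euler-type graph completion $(\theta_N,\phi_N)$ and the measure $d\mu_N=F_N(t;\mu_N)\,dt$ from \cite{Peter}, obtains the $d_5$ part (graph-convergence, i.e.\ $\int_0^1|\dot\theta_N-\dot\theta|\to 0$ and $\int_0^1|\dot\phi_N-\dot\phi|\to 0$) by citation, and then \emph{derives} all three $d_4$ terms from it: the $\max_s|\phi_N-\phi|$ term from the $L^1$ bound on the derivatives, and the terms $\int_0^T|F_N-F|\,dt$ and $|(\mu_N,|\mu_N|)([0,T])-(\mu,|\mu|)([0,T])|$ via graph-convergence $\Rightarrow$ weak$^{*}$ convergence, Banach--Steinhaus, Helly's selection theorem, a.e.\ convergence of distribution functions and dominated convergence. (The paper also checks the inclusion $\overline{\cup\mathcal{P}_N}\subseteq\mathcal{P}$ using that $\mathcal{P}$ is the $d_4$-completion of the absolutely continuous measures; you omit this, but it is a minor point.) You instead elect to carry out option (b), a self-contained construction; this buys independence from \cite{Peter} and a much more direct estimate for $\int_0^T|F_N-F|\,dt$ (a monotone interpolant bound rather than the weak$^{*}$/Helly machinery), which is a legitimate and arguably cleaner alternative for the $d_4$ part.

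The one place where your route (b) has a real gap is at the atoms. You assert that by putting $\pi(t_i-)$ and $\pi(t_i)$ in the mesh, ``$\phi_N$ reproduces the affine profile of $\phi$ exactly'' on $I_i$. But $\phi$ is affine on $I_i$ only when $\psi_{t_i}$ is constant: in general $\dot\phi(s)=\psi_{t_i}(\alpha_{t_i}(s))/(\pi(t_i)-\pi(t_i-))$ on $I_i$ is an arbitrary bounded measurable profile, so a single affine ramp matching the endpoints of $I_i$ leaves $\int_{I_i}|\dot\phi_N-\dot\phi|\,ds$ equal to the $L^1$ distance of $\psi_{t_i}$ from its mean, which does not vanish as $N\to\infty$. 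Your fallback (approximate $\dot\phi$ on $I_i$ in $L^1$ by step functions) is the right repair, but it forces you to subdivide $I_i$ in $s$-space, and the corresponding $t$-mesh points then collapse ($\bar t_k=\bar t_{k+1}=t_i$), contradicting the strictly increasing partition $\bar t_0<\cdots<\bar t_N$ required for $\mu_N\in\mathcal{P}_N$ to be absolutely continuous. The fix is to smear each retained atom over a $t$-interval of width $\delta_N\to 0$ (which costs only $\delta_N$ in $\int|\dot\theta_N-\dot\theta|$) --- but that is precisely the construction of \cite{Peter} that your option (a) invokes, so in the end the two routes converge. With that repair made explicit, your argument goes through.
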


\begin{proof}
For the first inclusion, let $\Omega\in \overline{\cup\mathcal{P}_N}$, then there exists a sequence $\{\Omega_N\}_{N \in \mathcal{N}}$ $\subset \cup\mathcal{P}_N$ so that $\Omega_N\rightarrow^{d_3}\Omega$, that is, $d_4(\Omega_N,\Omega)\rightarrow 0$ as $N\rightarrow \infty$. As $\mathcal{P}$ is the completion of the absolutely measures in the metric $d_4$, $\Omega \in \mathcal{P}$. Thence,
$$
\overline{\cup\mathcal{P}_N} \subseteq \mathcal{P}.
$$

Let $\Omega:=(\mu,|\mu|,\psi_{t_i}) \in \mathcal{P}$. We need to show there exists a sequence of $\cup \mathcal{P}_N$ converging to $\Omega$ in the metric $d_3$. Let $(\theta,\phi)$ be the gaph completion of $\mu$. By \cite{Peter}, there exists a partition of $[0,T]$, $0=:\bar{t}_0<\bar{t}_1<...<\bar{t}_N:=T$, and functions $\theta_N:[0,1]\rightarrow[0,T]$, $F_N:[0,T]\rightarrow\R^n$, $\phi_N:[0,1]\rightarrow K$, given by
$$
\theta_N(s)=\bar{t}_k+\frac{s-s_k}{h}(\bar{t}_{k+1}-\bar{t}_k) \;\; \mbox{whenever} \; s \in [s_k,s_{k+1}],
$$
where $h=1/N$ and $s_k=kh$,
$$
F_N(t;\mu_N)=\phi(s_k)+\frac{t-\bar{t}_k}{\bar{t}_{k+1}-\bar{t}_{k}}(\phi(s_{k+1})-\phi(s_k)) \;\; \mbox{whenever} \; t \in [\bar{t}_k,\bar{t}_{k+1}],
$$
$$
\phi_N(s)=(F_N\circ \theta_N)(s),
$$
and a measure given by
$$
d\mu_N=F_N(t;\mu_N)dt.
$$
Note that $\theta_N(\cdot)$ and $\phi_N(\cdot)$ are Lipschitz of rank $b$ and $r$, respectively, the same rank of the functions $\theta(\cdot)$ and $\phi(\cdot)$, respectively. These functions satisfy the graph-convergence,
$$
\int_0^1|\dot{\theta}_N(s)-\dot{\theta}(s)|ds\rightarrow 0 \;\;\;\mbox{and}\;\;\; \int_0^1|\dot{\phi}_N(s)-\dot{\phi}(s)|ds\rightarrow 0.
$$
We have the inequality
$$
0\leq |\phi_N(s)-\phi(s)|\leq \left|\int_0^1(\dot{\phi}_N(\tau)-\dot{\phi}(\tau))d\tau\right|+|\phi_N(0)-\phi(0)|\leq \int_0^1|\dot{\phi}_N(\tau)-\dot{\phi}(\tau)|d\tau.
$$
We can pass the limit in the last inequality and use the graph-convergence and the fact that $\phi_N(0)=\phi(0)$ to get
$$
\max_{s \in [0,1]}|\phi_N(s)-\phi(s)|\rightarrow 0.
$$
By \cite{Peter}, the graph-convergence is stronger than the weak$^{*}$ convergence, so $\mu_N\rightarrow^{*}\mu$. By Banach-Steinhaus theorem, \cite{MR1157815}, as $\mu_N\rightarrow^{*}\mu$, there exists $c>0$ so that $\|\mu_N\|\leq c$ for all $N$, where $\|\mu_N\|$ is the total variation of the measure $\mu_N$. By Helly's Theorem, \cite{Markov}, it is possible to construct a measure $\nu$ on $[0,T]$ and select from $|\mu_N|$ a subsequence such that $|\mu_N|\rightarrow^{*}\nu$. As $(\mu_N,|\mu_N|)\rightarrow^{*}(\mu,\nu)$ and our measures are positives we conclude that $\nu=|\mu|$. By Lemma 7.1, page 134, \cite{MR2161606}, $F_N(t;\mu_N)\rightarrow F(t;\mu)$ for all $t \in$ Cont$|\mu|$, where Cont$|\mu|$ denotes all points of continuity of the scalar-valued measure $|\mu|$. As the set of all points of discontinuity of $|\mu|$ has null Borel measure we can conclude that $F_N(t;\mu_N)\rightarrow F(t;\mu)$ a.e. $t \in [0,T]$.

Note that for $t \in [0,T]$, there exists $k \in \{0,...,N-1\}$ so that $t \in [\bar{t}_k,\bar{t}_{k+1}]$, and
$$
|F_N(t;\mu_N)|=\left|\phi(s_k)+\frac{t-\bar{t}_k}{\bar{t}_{k+1}-\bar{t}_k}(\phi(s_{k+1})-\phi(s_k))\right|\leq \left|\phi(s_k)\right|+
$$
$$
+\left|\frac{t-\bar{t}_k}{\bar{t}_{k+1}-\bar{t}_k}\right|\left|(\phi(s_{k+1})-\phi(s_k))\right| \leq \left|\phi(s_k)\right|+\left|\phi(s_{k+1})-\phi(s_k)\right|.
$$  
As $\phi(\cdot)$ is continuous and defined over the compact set $[0,1]$, there exists $M>0$ so that $|\phi(s)|\leq M$ for all $s \in [0,1]$. So, $|F_N(t;\mu_N)|\leq 3M$ for all $N \in \mathcal{N}$ and $t \in [0,T]$. As $M$ is integrable and $F_N(\cdot;\mu_N)$ is absolutely continuous, then $F_N(t;\mu_N)$ is integrable for each $N \in \mathcal{N}$. By the Dominate Convergence Theorem,
$$
\int_0^T|F_N(t;\mu_N)-F(t;\mu)|dt \rightarrow 0,\; N\rightarrow \infty.
$$
Note that $\nu_N:=|\mu_N|$ is a measure from $[0,T]$ to $\R_{+}$ and $|\nu_N|=\nu_N$. Then we have $(\nu_N,|\nu_N|)\rightarrow^{*}(|\mu|,|\mu|)$. Again, by Lemma 7.1, page 134, \cite{MR2161606}, $F_N(t;\nu_N):=\nu_N([0,t])\rightarrow F(t;|\mu|):=|\mu|([0,t])$ for all $t \in$ Cont$(|\mu|)$. As above, $F_N(t;\nu_N)\rightarrow F(t;|\mu|)$ a.e. $t \in [0,T]$. As $\nu_N$ is increasing we must have $\nu_N([0,t])\leq \|\nu_N\|=\|\mu_N\|\leq c$, that is, $|F_N(t;\nu_N)|\leq c$ for all $N \in \mathcal{N}$ and $t \in [0,T]$. We can use the same argument above and get
$$
\int_0^T|F_N(t;\nu_N)-F(t;|\mu|)|dt \rightarrow 0, \; N\rightarrow \infty.
$$
Then,
\[\begin{array}{lll}
\vspace{0.3cm}
0\leq \int_0^T|F_N(t;(\mu_N,\nu_N))-F(t;(\mu,|\mu|))|dt \\
\;\;\;\; \leq\int_0^T|F_N(t;\mu_N)-F(t;\mu)|dt +\int_0^T|F_N(t;\nu)-F(t;|\mu|)|dt \rightarrow 0, \; N\rightarrow\infty.
\end{array}\]

By \cite{Vinter}, $[0,T]$ is a continuity set for any positive measure defined on $[0,T]$, and
$$
\int_{[0,T]}d\mu=\lim_{N\rightarrow\infty}\int_{[0,T]}d\mu_N \implies |\mu_N([0,T])-\mu([0,T])|\rightarrow 0.
$$
The same holds for $\nu_N$. Then, we get
$$
|(\mu_N,\nu_N)([0,T])-(\mu,|\mu|)([0,T])|\rightarrow 0.
$$
\end{proof}

By the density of the union of each set, follows that $\cup S_N$ is dense in $B$.

\begin{lemma}\label{teo1}
$\tilde{S}_{C,N} \rightarrow^{\mathcal{N}} \tilde{S}_C$, $N \rightarrow \infty$, where the convergence is in the sense of Kuratowski, \cite{Vinter}.
\end{lemma}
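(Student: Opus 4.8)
The plan is to check directly the two inclusions that constitute Kuratowski convergence $\tilde S_{C,N}\to^{\mathcal N}\tilde S_C$: the inner (lower) limit, namely that every $\eta\in\tilde S_C$ is the $d$-limit of a sequence $\eta_N\in\tilde S_{C,N}$, and the outer (upper) limit, namely that whenever $\mathcal K\subset\mathcal N$ is infinite, $\eta_N\in\tilde S_{C,N}$ for $N\in\mathcal K$ and $d(\eta_N,\eta)\to 0$, then $\eta\in\tilde S_C$. I would first record that the sets split as products: since $\mathcal C_N=\R^n$, and since each $\mu_N\in\mathcal P_N$ is an absolutely continuous $K$-valued measure with no atoms — hence trivially satisfies the hypotheses on $\Omega$ imposed in \eqref{eq} — one has $\mathcal P_N\subseteq\mathcal P$, so $\tilde S_{C,N}=\tilde S_C\cap S_N=\mathcal C\times(\mathcal U_C\cap L^m_N)\times\mathcal P_N$. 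As $d=d_1+d_2+d_3$ is a product metric, both inclusions can be handled one coordinate at a time.

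For the inner limit, fix $\eta=(\xi^0,\hat u,\Omega)\in\tilde S_C$. In the first coordinate I take $\xi^0_N:=\xi^0\in\mathcal C$, so its contribution to $d$ is zero. In the control coordinate I let $\hat u_N\in L^m_N$ be the step function whose value on $[k/N,(k+1)/N)$ equals the mean $N\int_{k/N}^{(k+1)/N}\hat u(s)\,ds$; since $\hat u(s)\in\bar{\mathcal U}$ a.e.\ and $\bar{\mathcal U}$ is convex and compact, these means again lie in $\bar{\mathcal U}$, so $\hat u_N\in\mathcal U_C\cap L^m_N$, and $\hat u_N\to\hat u$ in $L^2$ by the standard $L^2$-convergence of conditional expectations along the refining dyadic partitions $N=2^k$ (equivalently, density of step functions in $L^2$ together with the contraction $\|\hat u_N\|_{L^2}\le\|\hat u\|_{L^2}$), so $d_2(\hat u_N,\hat u)\to 0$. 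In the measure coordinate I use Lemma~\ref{lem}, whose proof provides, for each $N$, an element $\Omega_N\in\mathcal P_N$ with $d_3(\Omega_N,\Omega)\to 0$. Setting $\eta_N:=(\xi^0,\hat u_N,\Omega_N)$ gives $\eta_N\in\tilde S_{C,N}$ and $d(\eta_N,\eta)\to 0$.

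For the outer limit, let $\mathcal K\subset\mathcal N$ be infinite, $\eta_N=(\xi^0_N,\hat u_N,\Omega_N)\in\tilde S_{C,N}$ for $N\in\mathcal K$, and $d(\eta_N,\eta)\to 0$, where $\eta=(\xi^0,\hat u,\Omega)$. Since $\tilde S_{C,N}\subseteq\tilde S_C$ we already have $\xi^0_N\in\mathcal C$, $\hat u_N\in\mathcal U_C$, $\Omega_N\in\mathcal P$. From $\xi^0_N\to\xi^0$ in $\R^n$ and $\mathcal C$ closed, $\xi^0\in\mathcal C$. From $d_2(\hat u_N,\hat u)\to 0$ I extract a subsequence with $\hat u_N\to\hat u$ a.e.; as every $\hat u_N$ takes values in the closed set $\bar{\mathcal U}$, so does $\hat u$ a.e., which yields $\|\hat u\|_\infty\le\omega\beta_{\max}$ and hence $\hat u\in\mathcal U_C$. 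Finally, since $d_3\ge d_4$ we get $d_4(\Omega_N,\Omega)\to 0$, and because $\Omega_N\in\mathcal P$ with $(\mathcal P,d_4)$ complete (it is the $d_4$-completion of the absolutely continuous measures), $\Omega\in\mathcal P$. Therefore $\eta\in\tilde S_C$.

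The step I expect to cost the most care is the control coordinate of the inner limit: the finite-dimensional approximant must simultaneously converge in $L^2$ and remain admissible (values in $\bar{\mathcal U}$, hence in the $L^\infty$-ball), and it is exactly the averaging/conditional-expectation construction combined with convexity of $\bar{\mathcal U}$ that reconciles these requirements. The measure coordinate is entirely delegated to Lemma~\ref{lem}, and everything else reduces to closedness of $\mathcal C$ and of $\bar{\mathcal U}$ and completeness of $(\mathcal P,d_4)$.
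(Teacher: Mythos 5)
Your proof is correct and follows essentially the same route as the paper: both Kuratowski inclusions are verified coordinate-by-coordinate, using closedness of $\mathcal{C}$, the admissibility and density of piecewise-constant controls, Lemma~\ref{lem} for the measure coordinate, and the fact that $(\mathcal{P},d_4)$ is the completion of the absolutely continuous measures. The only difference is cosmetic: where the paper delegates the control coordinate to Proposition~4.3.1 of \cite{Polak}, you supply the explicit conditional-expectation (local averaging) construction together with convexity and closedness of $\bar{\mathcal{U}}$, which is exactly the content of that cited result.
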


\begin{proof}
Let $\{\eta_N=(\xi^0_N,u_N,\Omega_N)\}_{N \in \mathcal{N}}$ be a sequence in $\tilde{S}_{C,N}$ such that $\eta_{N} \rightarrow^{d} \eta=(\xi^0,u,\Omega)$. As $\mathcal{C}$ is closed, $\xi^0 \in \mathcal{C}$. The part that $u \in \mathcal{U}_C$ is given by Proposition 4.3.1, \cite{Polak}. Now, we know that $\Omega_N=(\mu_N,|\mu_N|,0) \rightarrow^{d_3}\Omega=(\mu,|\mu|,\{\psi_{t_i}\})$. As it was mentioned, $\mathcal{P}$ is the completion of the set of absolutely continuous vector-valued measures given on $[0,T]$ in the metric $d_4$. As the other part of the metric $d_3$ ($d_5$) is only completing the other one we can conclude that $\Omega \in \mathcal{P}$.
$$
\therefore \overline{\lim}\tilde{S}_{C,N} \subset \tilde{S}_C.
$$
Now, take $\eta=(\xi^0,u,\Omega) \in \tilde{S}_C$. We must find a sequence in $\tilde{S}_{C,N}$ that converges to $\eta$ in the metric $d$.
By Proposition 4.3.1, \cite{Polak}, and by Lemma \ref{lem}, it follows that there exists such a sequence.
$$
\therefore \tilde{S}_C \subset \underline{\lim}\tilde{S}_{C,N}.
$$
\end{proof}

Given $\eta=(\xi^0_N,u_N,\Omega_N) \in S_N$, we can use the Euler's descretization to get the discrete dynamic below by the continuous dynamic given by \eqref{defy.}. In this way, take $N \in \mathcal{N}$, $h=1/N$ the step size and $s_k=kh$, $k=0,...,N$. We have
\begin{equation}\label{discy}
\begin{array}{lll}
\vspace{0.2cm}
y^{\eta}_N(s_{k+1})-y^{\eta}_N(s_k)=f\left(y^{\eta}_N(s_k),u_N(s_k)\right)\left(\theta_N(s_{k+1})-\theta_N(s_k)\right)\\
\hspace{3.5cm}
\vspace{0.2cm}
+ g\left(y^{\eta}_N(s_k)\right)\left(\phi_N(s_{k+1})-\phi_N(s_k)\right),\\
 k=0,...,N-1,\; y^{\eta}_N(0)=\xi^0_N,
\end{array}
\end{equation}
where $\theta_N:[0,1]\rightarrow [0,T]$ and $\phi_N:[0,1]\rightarrow K$ are as defined in $\mathcal{P}_N$.

We associate the function
\begin{equation}\label{polig}
y^{\eta}_N(s):=\sum_{k=0}^{N-1}\left[y^{\eta}_N(s_k)+\frac{s-s_k}{h}\left(y^{\eta}_N(s_{k+1})-y^{\eta}_N(s_k)\right)\right]\tau_{N,k}(s),
\end{equation}
where $\{y^{\eta}_N(s_k)\}_{k=0}^{N}$ is the solution of the discrete system \eqref{discy}.

\begin{lemma}\label{lim}
Let $\eta=(\xi^0_N,u_N,\Omega_N) \in S_{N}$ and $\{y^{\eta}_N(s_k)\}_{k=0}^{N}$ be the solution of the discretized equation corresponding to this $\eta$. Thus, the following inequality holds
$$
|y^{\eta}_N(s_k)|+1 \leq e^{\beta}(1+|\xi^0_N|),
$$
where $\beta:=K_1(b+r)$, $K_1$ is the constant relative to the linear growth of the functions $f(\cdot,\cdot)$ and $g(\cdot)$ and, $b$ and $r$ are the Lipschitz constants of the functions $\theta_N(\cdot)$ and $\phi_N(\cdot)$, respectively.
\end{lemma}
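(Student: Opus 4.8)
The plan is to run a discrete Gr\"onwall argument directly on the Euler recursion \eqref{discy}. First I would take norms in \eqref{discy} and apply the triangle inequality together with the linear growth bounds in Assumption \ref{assump}a), $|f(x,u)|\leq K_1(1+|x|)$ and $\|g(x)\|\leq K_1(1+|x|)$, evaluated at $x=y^{\eta}_N(s_k)$, $u=u_N(s_k)$. For the increments of the clock and of the control variable I would use that $\theta_N(\cdot)$ and $\phi_N(\cdot)$ are Lipschitz of ranks $b$ and $r$ (as recorded in the definition of $\mathcal{P}_N$), so that $|\theta_N(s_{k+1})-\theta_N(s_k)|\leq b\,h$ and $|\phi_N(s_{k+1})-\phi_N(s_k)|\leq r\,h$, with $h=1/N$. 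This yields the one-step estimate
$$
|y^{\eta}_N(s_{k+1})|\leq |y^{\eta}_N(s_k)| + K_1\bigl(1+|y^{\eta}_N(s_k)|\bigr)(b+r)h .
$$

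Next I would add $1$ to both sides and factor, obtaining
$$
1+|y^{\eta}_N(s_{k+1})|\leq \bigl(1+|y^{\eta}_N(s_k)|\bigr)\Bigl(1+\tfrac{\beta}{N}\Bigr),
\qquad \beta:=K_1(b+r),
$$
for $k=0,\dots,N-1$. Iterating this inequality down to the initial condition $y^{\eta}_N(0)=\xi^0_N$ gives
$$
1+|y^{\eta}_N(s_k)|\leq \bigl(1+|\xi^0_N|\bigr)\Bigl(1+\tfrac{\beta}{N}\Bigr)^{k}
\leq \bigl(1+|\xi^0_N|\bigr)\Bigl(1+\tfrac{\beta}{N}\Bigr)^{N},
$$
since $k\leq N$ and the base is $\geq 1$. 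Finally I would invoke the elementary inequality $(1+\beta/N)^N\leq e^{\beta}$ to conclude
$$
|y^{\eta}_N(s_k)|+1\leq e^{\beta}\bigl(1+|\xi^0_N|\bigr),
$$
which is the claimed bound.

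The argument is essentially routine; there is no substantive obstacle. The only points requiring a little care are making sure the Lipschitz increments of $\theta_N$ and $\phi_N$ are bounded uniformly in $k$ by $bh$ and $rh$ (which follows from the rank-$b$, rank-$r$ Lipschitz property stated for $\mathcal{P}_N$), and that the telescoping/iteration is carried out so that the exponent never exceeds $N$, so that the bound $(1+\beta/N)^N\leq e^{\beta}$ applies and the estimate is genuinely uniform in $N$. Note also that the bound is stated in terms of $|\xi^0_N|$ and hence depends on $N$ only through the initial datum; along a convergent sequence $\xi^0_N\to\xi^0$ it is therefore uniformly bounded, which is the form in which it will be used later.
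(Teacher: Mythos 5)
Your proof is correct and is essentially the paper's argument made explicit: the paper disposes of this lemma by citing a corollary of the discrete Gr\"onwall lemma, and your one-step estimate $1+|y^{\eta}_N(s_{k+1})|\leq (1+|y^{\eta}_N(s_k)|)(1+\beta/N)$ followed by iteration and the bound $(1+\beta/N)^N\leq e^{\beta}$ is precisely the content of that cited result. No gaps.
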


\begin{proof}
This result follows from the Corollary of the Discrete Gronwall's Lemma, \cite{Z:2005}.
\end{proof}

Define 
$$
S_{C,N}:=\{\eta \in \tilde{S}_{C,N}:|y^{\eta}_N(s)|\leq L+1/N \;\;\; \forall s \in [0,1]\},
$$
where $y^{\eta}_N(\cdot)$ is given by \eqref{polig}.

We need to show that $S_{C,N}$ is converging to $S_C$, and, after that, we can define the approximated problems.

\begin{theorem}\label{SCN}
$S_{C,N} \rightarrow^{\mathcal{N}} S_C$, $N \rightarrow \infty$, where the convergence is in the sense of Kuratowski.
\end{theorem}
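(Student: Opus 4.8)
Recall that Kuratowski convergence $S_{C,N}\to^{\mathcal N}S_C$ amounts to the two inclusions $\overline{\lim}\,S_{C,N}\subset S_C$ and $S_C\subset\underline{\lim}\,S_{C,N}$. Since $S_{C,N}=\{\eta\in\tilde S_{C,N}:\ |y^\eta_N(s)|\le L+1/N\ \forall s\in[0,1]\}$ and $S_C=\{\eta\in\tilde S_C:\ \sup_{s}|y^\eta(s)|\le L\}$, and since Lemma~\ref{teo1} already gives the corresponding statement for $\tilde S_{C,N}$ and $\tilde S_C$, the whole matter reduces to controlling the state-constraint functionals along convergent sequences. The technical engine I would isolate first is a consistency/continuous-dependence lemma: \emph{if $\eta_N=(\xi^0_N,u_N,\Omega_N)\in\tilde S_{C,N}$ and $\eta_N\to^{d}\eta=(\xi^0,u,\Omega)$, then the Euler polygon $y^{\eta_N}_N(\cdot)$ of \eqref{polig}--\eqref{discy} converges uniformly on $[0,1]$ to the unique solution $y^\eta(\cdot)$ of \eqref{defy.}.} Its proof is a Bellman--Gronwall estimate, comparing the Euler solution of $\eta_N$ with the continuous solution of $\eta$ by routing through the continuous solution of $\eta_N$: write $y^{\eta_N}_N(s)-y^\eta(s)$ as an integral of increments, insert and subtract $f(y^\eta(\sigma),u(\theta(\sigma)))\dot\theta(\sigma)+g(y^\eta(\sigma))\dot\phi(\sigma)$, and bound the resulting terms using Assumption~\ref{assump} (Lipschitz continuity and linear growth of $f$ and $g$), the uniform bound of Lemma~\ref{lim} and its continuous analogue (equiboundedness of all trajectories), the uniform Lipschitz rank $b$ of $\theta_N$ and $\limsup_N\|\dot\phi_N\|_\infty\le r$, the $O(1/N)$ grid--interpolation error of the Euler scheme, and the metric convergences packed into $d_3=d_4+d_5$ --- namely $\int_0^1|\dot\theta_N-\dot\theta|\,ds\to0$, $\int_0^1|\dot\phi_N-\dot\phi|\,ds\to0$, $\max_s|\phi_N-\phi|\to0$, together with $u_N\circ\theta_N\to u\circ\theta$ in $L^2$ and $\xi^0_N\to\xi^0$ in $\R^n$. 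Gronwall's inequality then yields $\|y^{\eta_N}_N-y^\eta\|_\infty\le\varepsilon_N$ with $\varepsilon_N\to0$.

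\textbf{The inclusion $\overline{\lim}\,S_{C,N}\subset S_C$.} Let $\eta_N\in S_{C,N}$ with $\eta_N\to^{d}\eta$. Since $S_{C,N}\subset\tilde S_{C,N}$, Lemma~\ref{teo1} gives $\eta\in\tilde S_C$. By the consistency lemma, $y^{\eta_N}_N\to y^\eta$ uniformly, so for every $s\in[0,1]$ we get $|y^\eta(s)|=\lim_N|y^{\eta_N}_N(s)|\le\limsup_N(L+1/N)=L$; hence $\sup_s|y^\eta(s)|\le L$ and $\eta\in S_C$. Here the shrinking tolerance $1/N$ is on the favourable side, so no difficulty arises.

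\textbf{The inclusion $S_C\subset\underline{\lim}\,S_{C,N}$.} Let $\eta\in S_C$. By Lemma~\ref{teo1} (which rests on Lemma~\ref{lem} and the density of $\cup S_N$ in $B$) there is a sequence $\eta_N\in\tilde S_{C,N}$ with $\eta_N\to^{d}\eta$, and the consistency lemma gives $\sup_s|y^{\eta_N}_N(s)|\le\sup_s|y^\eta(s)|+\|y^{\eta_N}_N-y^\eta\|_\infty\le L+\varepsilon_N$ with $\varepsilon_N\to0$. \emph{This is the step I expect to be the main obstacle}: $\varepsilon_N\to0$ is not by itself enough, because to conclude $\eta_N\in S_{C,N}$ one needs the trajectory error to fit inside the prescribed tolerance, i.e. $\varepsilon_N\le1/N$ for all large $N$. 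I would resolve this by making the estimate quantitative --- the Euler/continuous-dependence bound above has the form $\|y^{\eta_N}_N-y^\eta\|_\infty\le C\bigl(1/N+d(\eta_N,\eta)\bigr)$ with $C$ depending only on $K',K'',K_1,b,r,T,L$, which is precisely the type of bound derived in Section~7 --- and then either (i) selecting, for each $N$, an element $\eta_N\in\tilde S_{C,N}$ realizing $d(\eta_N,\eta)\le \mathrm{dist}(\eta,\tilde S_{C,N})+1/N$ and observing that the admissible slack must be taken as a null sequence $\varepsilon_N$ dominating $C(1/N+\mathrm{dist}(\eta,\tilde S_{C,N}))$ (replacing the fixed $1/N$ in the definition of $S_{C,N}$ by such an $\varepsilon_N$ does not affect any of the other results), or (ii), when one insists on the literal tolerance $1/N$, first reducing to $\eta\in\tilde S_C$ with $\sup_s|y^\eta(s)|<L$ strictly by a controllability/perturbation argument, for which the slack $\varepsilon_N\to 0$ is then comfortably absorbed. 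In either case one obtains $\eta_N\in S_{C,N}$ for $N$ large with $\eta_N\to^{d}\eta$, so $\eta\in\underline{\lim}\,S_{C,N}$, and the two inclusions together give $S_{C,N}\to^{\mathcal N}S_C$.
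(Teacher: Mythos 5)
Your proof follows essentially the same route as the paper: both inclusions are reduced to Lemma~\ref{teo1} together with the uniform convergence of the Euler polygons $y^{\eta_N}_N$ to $y^{\eta}$ (the paper invokes Theorem~\ref{teoimp} for this; your ``consistency lemma'' is the same statement, proved the same Gronwall way). Your treatment of $\overline{\lim}\,S_{C,N}\subset S_C$ matches the paper's. Where you diverge is instructive: for $S_C\subset\underline{\lim}\,S_{C,N}$ the paper writes ``given $\varepsilon=1/N$ there exists $N_0$ such that for all $N\ge N_0$, $|y^{\eta_N}_N(s)-y^{\eta}(s)|\le 1/N$,'' which lets the tolerance depend on the running index and is exactly the non sequitur you flag: uniform convergence yields $\|y^{\eta_N}_N-y^{\eta}\|_\infty\le\varepsilon_N$ with $\varepsilon_N\to0$, but not $\varepsilon_N\le 1/N$, and the latter is what membership in $S_{C,N}$ literally requires. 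So you have correctly identified a real gap in the published argument rather than merely a stylistic difference. Your two repairs are both sound: replacing the tolerance $1/N$ in the definition of $S_{C,N}$ by a null sequence dominating the actual error (nothing downstream uses the specific rate $1/N$), or first perturbing to the strict interior $\sup_s|y^{\eta}(s)|<L$ via controllability so that any $\varepsilon_N\to0$ is absorbed. The only caveat is that the quantitative bound you hope to borrow from Section~7 controls only the discretization error for a fixed $(u,\Omega)$, namely $K_{\xi^0}(|\xi^0-\hat\xi^0|+1/N)$; it does not by itself bound the contribution of $d(\eta_N,\eta)$ through the control and measure components, so option (i) should be phrased with a generic null sequence rather than an explicit $C(1/N+d(\eta_N,\eta))$ unless you also prove Lipschitz dependence of $y^{\eta}$ on $(u,\Omega)$ in the metric $d$.
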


\begin{proof}
Let $\{\eta_N=(\xi^0_N,u_N,\Omega_N)\}_{N \in \mathcal{N}}$ be a sequence in $S_{C,N}$ such that $\eta_N \rightarrow^d \eta=(\xi^0,u,\Omega)$. As $S_{C,N} \subset \tilde{S}_{C,N}$, by Lemma \ref{teo1}, $\eta \in \tilde{S}_C$. We know that $|y^{\eta}_N(s)|\leq L+1/N$ for all $ s \in [0,1]$. By Theorem \ref{teoimp}, there exists $\mathcal{K} \subset \mathcal{N}$ so that $y^{\eta_N}_N(\cdot)$ uniformly converges to $y^{\eta}(\cdot)$ in $\mathcal{K}$, then, given $\varepsilon=1/N$, there exists $N_0 \in \mathcal{N}$ such that for all $N \geq N_0$, $N \in\mathcal{K}$ we have 
$$
|y^{\eta_N}_N(s)-y^{\eta}(s)|\leq 1/N \implies |y^{\eta}(s)|\leq |y^{\eta_N}_N(s)|+1/N \rightarrow L.
$$
$$
\therefore \overline{\lim}S_{C,N} \subset S_C.
$$
Now, let $\eta=(\xi^0,u,\Omega) \in S_C$. By Lemma \ref{teo1}, there exists a sequence $\{\eta_N=(\xi^0_N,u_N,\Omega_N)\}_{N \in \mathcal{N}} \in \tilde{S}_{C,N}$ so that $\eta_N\rightarrow^d \eta$. Again, by Theorem \ref{teoimp} there exists $\mathcal{K}\subset \mathcal{N}$ so that $y^{\eta_N}_N(\cdot)$ uniformly converges to $y^{\eta}(\cdot)$ in $\mathcal{K}$, then, given $\varepsilon=1/N$ there exists $N_0 \in \mathcal{N}$ such that for all $N \geq N_0$, $N \in\mathcal{K}$ we have 
$$
|y^{\eta_N}_N(s)-y^{\eta}(s)|\leq 1/N \implies |y^{\eta_N}_N(s)|\leq |y^{\eta}(s)|+1/N\leq L+1/N.
$$
$$
\therefore S_C\subset \underline{\lim}S_{C,N}.
$$
\end{proof}

Then, we get the approximated problems
$$
(P_{rep}^{C,N}) \;\;\; \min_{\eta \in S_{C,N}}f^0_N(y^{\eta}_N(0),y^{\eta}_N(1)),
$$
where $f^0_N(y^{\eta}_N(0),y^{\eta}_N(1)):=f^0(\xi^0_N,y^{\eta}_N(1))$.

\section{Consistent Approximations}\label{3.2}

In this section, we show that the problems $(P_{rep}^{C,N})$ with some optimality functions $\gamma^{C,N}(\cdot)$ are consistent approximations to the pair $(P_{rep},\gamma)$, where $\gamma(\cdot)$ is an optimality function to the problem $(P_{rep})$.

We begin with a theorem that gives us a convergence between the polygonal arc given by Euler's discretization and the solution of the reparametrized problem. Note that $\eta_N \in \tilde{S}_{C,N}$ is arbitrary and is converging to $\eta \in \tilde{S}_C$ in the metric $d$. This means that the convergence in the metric $d$ is enough to guarantee the convergence between the solutions.

\begin{theorem}\label{teoimp}
Suppose that the Assumption \ref{assump} holds, $N \in \mathcal{N}$ and $\eta_N \rightarrow^{d} \eta$, where $\eta_N \in \tilde{S}_{C,N}$ and $\eta \in \tilde{S}_C$. Thus, there exists $\mathcal{K} \subset \mathcal{N}$ such that $y^{\eta_N}_N(\cdot)$ uniformly converge to $y^{\eta}(\cdot)$, with $N \in \mathcal{K}$, $N \rightarrow \infty$, where $y^{\eta_N}_N(\cdot)$ is defined in \eqref{polig} and $y^{\eta}(\cdot)$ is the solution of \eqref{defy.}.
\end{theorem}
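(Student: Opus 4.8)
The plan is to argue by compactness, following the classical recipe for convergence of Euler polygonals, but now with a state equation driven by $\dot\theta_N,\dot\phi_N$. \emph{Step 1 (uniform bound and equi-Lipschitz estimate).} Since $\eta_N\to^{d}\eta$ forces $\xi^0_N\to\xi^0$, the numbers $|\xi^0_N|$ are bounded, so Lemma \ref{lim} gives $|y^{\eta_N}_N(s_k)|\le e^{\beta}\big(1+\sup_N|\xi^0_N|\big)-1$ at every node; as on each $[s_k,s_{k+1}]$ the arc \eqref{polig} is a convex combination of two nodal values, $\|y^{\eta_N}_N\|_\infty\le R$ with $R$ independent of $N$. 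Because the grid defining $L^m_N$ and the partition defining $\mathcal{P}_N$ are linked by $\theta_N(s_k)=\bar t_k$, both $\theta_N$ and $\phi_N=F_N\circ\theta_N$ are affine on each $[s_k,s_{k+1}]$; hence, writing $[s]_N:=\max\{s_k\le s\}$ and using $u_N(s_k)=u_N(s)$ there, the scheme \eqref{discy} rewrites as
$$
\dot y^{\eta_N}_N(s)=f\!\big(y^{\eta_N}_N([s]_N),u_N(s)\big)\dot\theta_N(s)+g\!\big(y^{\eta_N}_N([s]_N)\big)\dot\phi_N(s)\qquad\text{a.e. }s\in[0,1].
$$
By the linear growth in Assumption \ref{assump}, the bound $R$, the uniform Lipschitz rank $b$ of the $\theta_N$, and $\limsup_N\|\dot\phi_N\|_\infty\le r$, we obtain $\|\dot y^{\eta_N}_N\|_\infty\le\Lambda$ for all large $N$, with $\Lambda$ independent of $N$; thus $\{y^{\eta_N}_N\}$ is bounded and equicontinuous.

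\emph{Step 2 (extraction and identification of the limit).} By the Arzel\`a--Ascoli theorem there are an infinite $\mathcal{K}\subset\mathcal{N}$ and a $\Lambda$-Lipschitz $\bar y\in C([0,1];\mathbb{R}^n)$ with $y^{\eta_N}_N\to\bar y$ uniformly along $\mathcal{K}$. Integrating the expression for $\dot y^{\eta_N}_N$ gives
$$
y^{\eta_N}_N(s)=\xi^0_N+\int_0^s\!\Big[f\!\big(y^{\eta_N}_N([\sigma]_N),u_N(\sigma)\big)\dot\theta_N(\sigma)+g\!\big(y^{\eta_N}_N([\sigma]_N)\big)\dot\phi_N(\sigma)\Big]d\sigma ,
$$
and I would pass to the limit along $\mathcal{K}$ using: $\xi^0_N\to\xi^0$; $|y^{\eta_N}_N([\sigma]_N)-\bar y(\sigma)|\le\Lambda/N+\|y^{\eta_N}_N-\bar y\|_\infty\to0$ uniformly; $\int_0^1|u_N-u|\le\sqrt{d_2(u_N,u)}\to0$ by Cauchy--Schwarz; and $\int_0^1|\dot\theta_N-\dot\theta|\to0$, $\int_0^1|\dot\phi_N-\dot\phi|\to0$ since $d_5(\Omega_N,\Omega)\to0$. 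Splitting the $f$-integrand difference into $\big[f(y^{\eta_N}_N([\sigma]_N),u_N(\sigma))-f(\bar y(\sigma),u(\sigma))\big]\dot\theta_N(\sigma)$ plus $f(\bar y(\sigma),u(\sigma))\big[\dot\theta_N(\sigma)-\dot\theta(\sigma)\big]$, bounding the first factor by $K'\big(|y^{\eta_N}_N([\sigma]_N)-\bar y(\sigma)|+|u_N(\sigma)-u(\sigma)|\big)$ together with $|\dot\theta_N|\le b$, and using the linear growth of $f(\bar y,u)$ on the second, one gets $L^1$ convergence to $f(\bar y,u)\dot\theta$; the $g$-term is handled identically via $\|g(x)-g(\hat x)\|\le K''|x-\hat x|$ and $\limsup_N\|\dot\phi_N\|_\infty\le r$. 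Hence $\bar y$ satisfies \eqref{defy.} with data $\eta$.

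\emph{Step 3 (conclusion).} Since $\eta\in\tilde S_C$ and $\bar y$ is Lipschitz and solves \eqref{defy.}, Theorem \ref{unicidade} forces $\bar y=y^{\eta}$. Therefore $y^{\eta_N}_N\to y^{\eta}$ uniformly along $\mathcal{K}$, which is the assertion. (The limit being independent of the subsequence, the full sequence in fact converges, but only the extraction of $\mathcal{K}$ is claimed.)

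The step I expect to be the crux is the passage to the limit inside the integral in Step 2: one must coordinate three modes of convergence at once --- $y^{\eta_N}_N\to\bar y$ uniformly, $u_N\to u$ in $L^2$, and $\dot\theta_N\to\dot\theta$, $\dot\phi_N\to\dot\phi$ in $L^1$ --- inside products. This is precisely why the metric $d$ is the sum $d_1+d_2+d_3$ with $d_3$ carrying the graph-convergence term $d_5$: the $L^\infty$ bounds $\|\dot\theta_N\|_\infty\le b$ and $\limsup_N\|\dot\phi_N\|_\infty\le r$ let the uniformly- and $L^2$-small factors be absorbed, while $L^1$-closeness of $\dot\theta_N,\dot\phi_N$ to $\dot\theta,\dot\phi$ disposes of the remaining term, whose coefficient is merely bounded. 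A harmless point to dispatch is that the finitely many $N$ for which $\|\dot\phi_N\|_\infty$ is not yet below $r+1$ do not matter, since only large $N$ enter the limit.
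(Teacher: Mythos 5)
Your proposal is correct and follows essentially the same route as the paper: a uniform bound on the Euler nodes via Lemma \ref{lim} and the linear-growth/Lipschitz hypotheses, an Arzel\`a--Ascoli extraction, and the same term-by-term passage to the limit in the integral form, splitting each product so that the Lipschitz estimates absorb the uniformly convergent and $L^2$-convergent factors while the $L^1$ convergence of $\dot\theta_N,\dot\phi_N$ handles the rest. Your closing step --- identifying the limit through the integral equation and then invoking the uniqueness Theorem \ref{unicidade} --- is a slightly cleaner way to conclude than the paper's direct estimate of $|y^{\eta_N}_N(s)-y^{\eta}(s)|$, but it is not a different method.
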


\begin{proof}
Note that, over the interval $[s_k,s_{k+1}]$ we have
\begin{equation}\label{equ}
\vspace{0.3cm}
|\dot{y}^{\eta_N}_N(s)| \leq K_1(b+r)|y^{\eta_N}_N(s_k)|+ K_1(b+r) =: \beta|y^{\eta_N}_N(s_k)| + \beta,
\end{equation}
and by Lemma \ref{lim},
$$
|y^{\eta_N}_N(s_{k})| \leq e^{\beta}\left(|\xi^0_N|+1\right)-1, \; k \in \{0, ..., N-1\}.
$$
As $\xi^0_N$ is convergent, it follows that there exists $\overline{M}>0$ such that $|\xi^0_N| \leq \overline{M} \;\; \forall N \in \mathcal{N}$. So, $|y^{\eta_N}_N(s_{k})|\leq e^{\beta}M$. By equation \eqref{equ}, $\dot{y}^{\eta_N}_N(s)$ is uniformly bounded. Using the same argument we have that $y^{\eta_N}_N(s)$ is uniformly bounded too. By Arzel$\grave{a}$-Ascoli's Theorem, there exist $\mathcal{K} \subset \mathcal{N}$ and $y:[0,1] \rightarrow \mathbb{R}^n$ such that $y^{\eta_N}_N(\cdot)$ uniformly converge to $y(\cdot)$, $N \in \mathcal{K}$, $N \rightarrow \infty$. \\
Now, we need to show that $y(\cdot)$ satisfies the system \eqref{defy.}. For this, define
$$
y^{\eta}(s)=f(y(s),u(s))\dot{\theta}(s) + g(y(s))\dot{\phi}(s), \;\; y^{\eta}(0)=\xi^0.
$$
For $s \in [s_k,s_{k+1}]$,
\[\begin{array}{lllll}
\vspace{0.5cm}
|y^{\eta_N}_N(s)-y^{\eta}(s)| \leq \left|\int_0^{s}\left(\dot{y}^{\eta_N}_N(\sigma)-\dot{y}^{\eta}(\sigma)\right)d\sigma\right| +|\xi^0_N-\xi^0| \\
\vspace{0.5cm}
\quad \; \leq \sum_{j=0}^{k-1}\int_{s_j}^{s_{j+1}}\left|\dot{y}^{\eta_N}_N(\sigma)-\dot{y}^{\eta}(\sigma)\right|d\sigma +\int_{s_k}^s\left|\dot{y}^{\eta_N}_N(\sigma)-\dot{y}^{\eta}(\sigma)\right|d\sigma +|\xi^{0}_N-\xi^{0}|\\
\vspace{0.5cm}
\quad \; \leq \sum_{j=0}^{k-1}\int_{s_j}^{s_{j+1}}|f(y^{\eta_N}_N(s_j),u_N(s_j))-f(y(\sigma),u(\sigma))||\dot{\theta}_N(\sigma)|d\sigma \\
\vspace{0.5cm}
\quad \; + \sum_{j=0}^{k-1}\left[\int_{s_j}^{s_{j+1}}|f(y(\sigma),u(\sigma))|\left|\dot{\theta}_N(\sigma)-\dot{\theta}(\sigma)\right|d\sigma+\int_{s_j}^{s_{j+1}}|g(y(\sigma))|\left|\dot{\phi}_N(\sigma)-\dot{\phi}(\sigma)\right|d\sigma\right]\\
\vspace{0.5cm}
\quad \; + \sum_{j=0}^{k-1}\int_{s_j}^{s_{j+1}}|g(y^{\eta_N}_N(s_j))-g(y(\sigma))||\dot{\phi}_N(\sigma)|d\sigma \\
\vspace{0.5cm}
\quad \; +\int_{s_k}^{s}|\dot{y}^{\eta_N}_N(\sigma)-\dot{y}^{\eta}(\sigma)|d\sigma +|\xi^0_N-\xi^0|\\
\vspace{0.5cm}
\quad \; =\sum_{j=0}^{k-1}[I+II+III+IV]+\int_{s_k}^{s}|\dot{y}^{\eta_N}_N(\sigma)-\dot{y}^{\eta}(\sigma)|d\sigma+V.
\end{array}\]
Let's check that there exists $\mathcal{K} \subset \mathcal{N}$ such that the equation above converge to zero whenever $N \in \mathcal{K}$.\\
- For $I$, as $f(\cdot,\cdot)$ is Lipschitz,
\[\begin{array}{lllll}
\vspace{0.5cm}
I\leq \int_{s_j}^{s_{j+1}}K^{'}b\left(|y^{\eta_N}_N(s_j)-y(\sigma)|+|u_N(s_j)-u(\sigma)|\right)d\sigma \leq bK^{'}\int_{s_j}^{s_{j+1}}\left(|y^{\eta_N}_N(s_j)-y^{\eta_N}_N(\sigma)|\right)d\sigma\\
\vspace{0.5cm}
\qquad \qquad \quad +bK^{'}\int_{s_j}^{s_{j+1}}\left(|y^{\eta_N}_N(\sigma)-y(\sigma)| + |u_N(s_j)-u_N(\sigma)| + |u_N(\sigma)-u(\sigma)|\right)d\sigma,\\
\end{array}\]
since $\sup|\dot{\theta}_N(s)|\leq b$, for some $b>0$.\\
It is easy to verify that $y^{\eta_N}_N(\cdot)$ is Lipschitz. Let's denote its Lipschitz constant by $\kappa>0$. Then,
$$
\int_{s_j}^{s_{j+1}}|y^{\eta_N}_N(s_j)-y^{\eta_N}_N(\sigma)|d\sigma \leq \int_{s_j}^{s_{j+1}}\kappa|s_j-\sigma|d\sigma \leq \kappa h^2\rightarrow 0. 
$$
As $y^{\eta_N}_N(\cdot)$ uniformly converge to $y(\cdot)$ we have that $|y^{\eta_N}_N(\sigma)-y(\sigma)|\rightarrow 0$. As every sequence uniformly convergent is bounded, there exists $c>0$ so that
$$
|y^{\eta_N}_N(\sigma)-y(\sigma)|\leq c \;\; \forall N,\; \forall \sigma \in [0,1],
$$
then,
$$
\int_{s_j}^{s_{j+1}}|y^{\eta_N}_N(\sigma)-y(\sigma)|d\sigma \leq ch\rightarrow 0.
$$
As $u_N(s)=u_N(s_j)$ for all $s \in [s_j,s_{j+1}[$ we get
$$
\int_{s_j}^{s_{j+1}}|u_N(s_j)-u_N(\sigma)|d\sigma \rightarrow 0.
$$
We know $u_N(\sigma)\rightarrow^{d_2} u(\sigma)$. By H$\ddot{o}$lder's inequality, we get $u_N(\sigma)\rightarrow u(\sigma)$ in $L_1^m([0,1])$.

- For $II$, as $y^{\eta_N}_N(\cdot)$ uniformly converge to $y(\cdot)$, given $\varepsilon=1$, there exists $N_0 \in \mathbb{N}$ so that for all $N\geq N_0$, $|y(s)-y^{\eta_N}_N(s)|<1$ for all $s \in [0,1]$, i.e., $|y(s)|<1+|y^{\eta_N}_N(s)|<\hat{M}$, for some $\hat{M}>0$ since $y^{\eta_N}_N(\cdot)$ is uniformly bounded. As $f$ has linear growth,
\[\begin{array}{lll}
|f(y(\sigma),u(\sigma))| \leq K_1(1+|y(s)|) \leq K_1(1+\hat{M}).
\end{array}\]
By the convergence of $\eta_N$ in the metric $d$, we have
$$
0\leq \int_{s_j}^{s_{j+1}}|\dot{\theta}_N(s)-\dot{\theta}(s)|ds\leq \int_0^1|\dot{\theta}_N(s)-\dot{\theta}(s)|ds \rightarrow 0.
$$

- $III$ and $IV$ are completely analogous to $II$ and $I$, respectively.

- As $\eta_N \rightarrow^{d} \eta$, then there exists the convergence between the initials conditions, then we have the convergence of $V$. The last integral is totally analogous to the one that we just showed.
\end{proof}

In the same way that we did in the last theorem, we can prove the same result when the sequence of $\eta's$ belongs to the set $S_C$ and also the point of convergence and when both of them belong to the set $S_{C,N}$.

\begin{prop}\label{solcont}
a) Let $\{\eta_N=(\xi^0_N,u_N,\Omega_N)\}_{N \in \mathcal{N}} \subset S_{C}$ be a sequence so that $\eta_N \rightarrow^d \eta$, where $\eta \in S_C$. Thus, there exists $\mathcal{K} \subseteq \mathcal{N}$ such that $y^{\eta_N}(\cdot)$ uniformly converge to $y^{\eta}(\cdot)$ when $N \rightarrow \infty$, $N \in \mathcal{K}$, where $y^{\eta_N}(\cdot)$ and $y^{\eta}(\cdot)$ are the solution of the system \eqref{defy.} related to $\eta_N$ and $\eta$, respectively.\\
b) Let $\{\eta_N=(\xi^0_N,u_N,\Omega_N)\}_{N \in \mathcal{N}} \subset S_{C,N}$ be so that $\eta_N \rightarrow^d \eta$, $\eta \in S_{C,N}$. Let $y^{\eta_N}_N(\cdot)$ and $y^{\eta}(\cdot)$ be the polygonal arc given by the Euler's discretization, equation \eqref{polig}. Thus, there exists $\mathcal{K} \subseteq \mathcal{N}$ such that $y^{\eta_N}_N(\cdot)$ uniformly converge to $y^{\eta}(\cdot)$ when $N \rightarrow \infty$, $N \in \mathcal{K}$.
\end{prop}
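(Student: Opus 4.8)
The plan is to run, for part (a), the argument of the proof of Theorem~\ref{teoimp} with every discrete object replaced by its continuous counterpart, and to observe that part (b) is a restriction of Theorem~\ref{teoimp} to smaller feasible sets.

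For (a), fix $\{\eta_N=(\xi^0_N,u_N,\Omega_N)\}_{N\in\mathcal N}\subset S_C$ with $\eta_N\rightarrow^{d}\eta$. First I would obtain a uniform a~priori bound on $\{y^{\eta_N}(\cdot)\}$: writing $y^{\eta_N}$ in integral form from \eqref{defy.}, using the linear growth of $f$ and $g$ in Assumption~\ref{assump}, the uniform Lipschitz bounds $b$ and $r$ on the graph completions of the $\Omega_N\in\mathcal P$ (uniform because $d_3$-convergence keeps $\|\mu_N\|$ bounded), and Gronwall's inequality exactly as in the proof of Theorem~\ref{unicidade}, one gets $|y^{\eta_N}(s)|+1\le e^{\beta}(1+|\xi^0_N|)$ with $\beta=K_1(b+r)$; since $\{\xi^0_N\}$ converges it is bounded, so $\{y^{\eta_N}\}$ is uniformly bounded, and then $|\dot y^{\eta_N}(s)|\le K_1(b+r)\bigl(1+|y^{\eta_N}(s)|\bigr)$ shows it is uniformly Lipschitz, hence equicontinuous. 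Arzel\`a--Ascoli then yields $\mathcal K\subseteq\mathcal N$ and $y(\cdot)$ with $y^{\eta_N}\to y$ uniformly along $\mathcal K$.

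Next I would check that $y(\cdot)$ is the solution of \eqref{defy.} associated with $\eta$. Subtracting the integral forms of $y^{\eta_N}$ and of that solution and splitting the difference as in the proof of Theorem~\ref{teoimp} --- the decomposition is actually simpler here, since no Euler telescoping error $\kappa h^2$ arises --- the terms carrying $|y^{\eta_N}(\sigma)-y(\sigma)|$ are dominated via uniform convergence and the Lipschitz bounds on $f$ and $g$; the control terms are handled by noting that $d_2(u_N,u)\to 0$ gives $L^2$ and hence, by H\"older, $L^1$ convergence of $u_N$; the cross terms are bounded by $\sup_\sigma|f(y(\sigma),u(\sigma))|$ and $\sup_\sigma|g(y(\sigma))|$ (finite, by linear growth and boundedness of $y$) times $\int_0^1|\dot\theta_N-\dot\theta|\,ds$ and $\int_0^1|\dot\phi_N-\dot\phi|\,ds$, which tend to $0$ because $d_5(\Omega_N,\Omega)\to 0$; and the initial data contribute $|\xi^0_N-\xi^0|\to 0$. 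A final Gronwall step gives that $y$ solves \eqref{defy.} for $\eta$, and the uniqueness Theorem~\ref{unicidade} forces $y=y^{\eta}$. Since every subsequence admits a further subsequence with the same limit, the whole sequence in fact converges, though the statement only claims a subsequence.

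For (b), since $S_{C,N}\subseteq\tilde{S}_{C,N}$ and the limit lies in $\tilde{S}_C$, the hypotheses of Theorem~\ref{teoimp} are satisfied verbatim by the polygonal arcs $y^{\eta_N}_N(\cdot)$, and the conclusion follows with no new work. The one delicate point --- which I expect to be the main technical obstacle in (a) --- is the simultaneous passage to the limit in the products $f\bigl(y^{\eta_N}(\sigma),u_N(\sigma)\bigr)\dot\theta_N(\sigma)$ and $g\bigl(y^{\eta_N}(\sigma)\bigr)\dot\phi_N(\sigma)$, whose three factors converge in different topologies (uniform, $L^1$, and $L^1$ of the derivatives); the metric $d$ is designed precisely so that its $d_2$ and $d_5$ components, together with the $L^\infty$ bounds on the coefficients coming from the linear-growth hypothesis, make this product convergence succeed.
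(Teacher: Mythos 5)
Your proposal is correct and takes essentially the same route the paper intends: the paper gives no actual proof of Proposition~\ref{solcont}, only the one-line remark that the argument of Theorem~\ref{teoimp} carries over, and your write-up is precisely that adaptation (a priori Gronwall bound, Arzel\`a--Ascoli, the same splitting of the integral difference without the Euler telescoping term, and identification of the limit via Theorem~\ref{unicidade}). The only caveat is in part (b), where the paper's statement calls the limit object $y^{\eta}(\cdot)$ a ``polygonal arc'' even though what the Theorem~\ref{teoimp}-style argument actually produces is convergence to the continuous solution of \eqref{defy.}; your reading resolves this ambiguity in the way the paper evidently intends, so it is not a gap in your argument.
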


\begin{obs}\label{obs5}
Note that Theorem \ref{teoimp} holds when we change $\tilde{S}_C$ and $\tilde{S}_{C,N}$ by $S_C$ and $S_{C,N}$, respectively. The proof follows from Theorem \ref{teoimp} and the proof of Theorem \ref{SCN}.
\end{obs}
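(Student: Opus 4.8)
The plan is to prove Proposition~\ref{solcont} by repeating, in both parts, the three-step scheme of the proof of Theorem~\ref{teoimp}: first establish uniform a priori bounds on the approximating arcs, so that the family is equicontinuous; then extract, by Arzel\`a--Ascoli, a subsequence converging uniformly to some $y$; finally pass to the limit in the integral form of the dynamics to identify $y$ as the desired solution and invoke uniqueness (Theorem~\ref{unicidade}).

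\emph{Part (a).} Each $\eta_N=(\xi^0_N,u_N,\Omega_N)$ lies in $S_C\subset\tilde S_C$, so the graph completion $(\theta_N,\phi_N)$ of $\Omega_N$ has Lipschitz ranks $b$ and $r$, and the linear growth of $f$ and $g$ from Assumption~\ref{assump} gives $|\dot y^{\eta_N}(s)|\le K_1(b+r)\,(1+|y^{\eta_N}(s)|)$ a.e. Bellman--Gronwall then yields $|y^{\eta_N}(s)|\le e^{K_1(b+r)}(1+|\xi^0_N|)-1$; since $\xi^0_N\to\xi^0$ the initial data are bounded, so $\{y^{\eta_N}\}$ is uniformly bounded, and re-inserting this into the derivative estimate makes the family uniformly Lipschitz, hence equicontinuous. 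Arzel\`a--Ascoli produces $\mathcal K\subset\mathcal N$ and a Lipschitz $y$ with $y^{\eta_N}\to y$ uniformly on $\mathcal K$. To identify $y$ with $y^\eta$ I would write $y^{\eta_N}(s)=\xi^0_N+\int_0^s\bigl(f(y^{\eta_N},u_N)\dot\theta_N+g(y^{\eta_N})\dot\phi_N\bigr)\,d\sigma$ and pass to the limit term by term, exactly as in the estimates $I$--$V$ of Theorem~\ref{teoimp}: split $f(y^{\eta_N}(\sigma),u_N(\sigma))\dot\theta_N(\sigma)-f(y(\sigma),u(\sigma))\dot\theta(\sigma)$ into a piece bounded, via the Lipschitz property of $f$ and the rank bound $|\dot\theta_N|\le b$, by $\|y^{\eta_N}-y\|_\infty+\|u_N-u\|_{L^1}$ (the first $\to0$ by uniform convergence, the second because convergence in $d_2$ is $L^2$-, hence $L^1$-, convergence), and a piece bounded by $\|f(y(\cdot),u(\cdot))\|_\infty\,\|\dot\theta_N-\dot\theta\|_{L^1}$, where the supremum is finite by linear growth of $f$ and boundedness of $y$, and $\|\dot\theta_N-\dot\theta\|_{L^1}\to0$ is one of the two terms of $d_5(\Omega_N,\Omega)$, which $\to0$ since $d_5\le d_3\le d$. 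The $g$-term is identical, with $\|\dot\phi_N-\dot\phi\|_{L^1}\to0$, and $\xi^0_N\to\xi^0$ is the $d_1$-component. Thus $y$ solves \eqref{defy.} with data $\eta$, so $y=y^\eta$ by Theorem~\ref{unicidade}; since the limit is the same along every convergent subsequence, the whole sequence converges.

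\emph{Part (b).} Here the reference arc is likewise an Euler polygonal of the form \eqref{polig}, so the argument is a transcription of the proof of Theorem~\ref{teoimp} with the exact solution of \eqref{defy.} replaced by the polygonal reference arc; the a priori bounds come from Lemma~\ref{lim} instead of Bellman--Gronwall, and the only additional terms are the piecewise-constant sampling errors already met in the treatment of term $I$ of Theorem~\ref{teoimp}, namely those of the type $\int_{s_j}^{s_{j+1}}|y^{\eta_N}_N(s_j)-y^{\eta_N}_N(\sigma)|\,d\sigma=O(h^2)$ together with the control sampling term, which vanishes because the controls in $S_N$ are piecewise constant on the grid.

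The step I expect to be the main obstacle is the limit identification in part (a): the two trajectories are driven by \emph{different} time changes $(\theta_N,\phi_N)$ and $(\theta,\phi)$, so the difference $y^{\eta_N}-y$ cannot be closed off by a bare Gronwall inequality; one must use the uniform rank bound $|\dot\theta_N|\le b$ together with the $L^1$-convergences $\dot\theta_N\to\dot\theta$ and $\dot\phi_N\to\dot\phi$ to absorb the drift between the two reparametrizations. Those $L^1$-convergences are precisely what the component $d_5$ of the metric $d_3=d_4+d_5$ encodes, which is exactly why that refinement of the measure metric is needed and is the only place in the proof where it is essential.
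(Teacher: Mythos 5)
Your argument is mathematically sound, but you have taken a long detour around what the paper treats (correctly) as an immediate corollary. Observation~\ref{obs5} asserts Theorem~\ref{teoimp} with $\tilde S_{C}$ and $\tilde S_{C,N}$ replaced by the \emph{subsets} $S_{C}\subseteq\tilde S_{C}$ and $S_{C,N}\subseteq\tilde S_{C,N}$; since restricting the hypothesis sets only strengthens the hypotheses of Theorem~\ref{teoimp} while leaving its conclusion untouched, the statement follows in one line from that theorem (the reference to the proof of Theorem~\ref{SCN} is only needed if one wants to know additionally that a $d$-limit of points of $S_{C,N}$ automatically lands in $S_{C}$). You instead re-execute the entire machinery of the proof of Theorem~\ref{teoimp} --- a priori bounds, Arzel\`a--Ascoli, limit identification through the $L^1$-convergence of $\dot\theta_N$ and $\dot\phi_N$ encoded in $d_5$, and uniqueness via Theorem~\ref{unicidade} --- which is valid and in fact yields the stronger conclusions of Proposition~\ref{solcont} (the case of two exact solutions and the case of two polygonal arcs), plus full-sequence rather than subsequential convergence. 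Note, though, that what Observation~\ref{obs5} actually needs is the \emph{mixed} case (polygonal arcs $y^{\eta_N}_N$ with $\eta_N\in S_{C,N}$ converging to the exact solution $y^{\eta}$ with $\eta\in S_C$), which is neither part (a) nor part (b) of Proposition~\ref{solcont} as you have framed them; that mixed case is precisely Theorem~\ref{teoimp} itself restricted to subsets, so the honest proof of the observation is the containment argument, not a new compactness argument. Your correct identification of the role of $d_5$ in absorbing the drift between the two time changes is a genuine insight into why Theorem~\ref{teoimp} works, but it belongs to the proof of that theorem, not to this observation.
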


The following Lemma is very important to the next result.

\begin{lemma}\label{teodef}
Let $\varphi:S_C \rightarrow \mathbb{R}$ be given by
$$
\varphi(\bar{\eta})=\langle\nabla f^0(\xi),\bar{\xi}-\xi\rangle +\frac{1}{2}\bar{d}((\xi^0,u,\Omega),(\bar{\xi}^0,\bar{u},\bar{\Omega}))
$$
for each $\eta \in S_C$ fixed and $\xi=(\xi^0,y^{\eta}(1)) \in \mathcal{C}\times \mathbb{R}^n$. Then there exists $\hat{\eta} \in S_C$ such that
$$
\varphi(\hat{\eta})=\min_{\bar{\eta}\in S_C}\varphi(\bar{\eta}).
$$
\end{lemma}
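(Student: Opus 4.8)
\noindent The plan is to run the direct method: prove $\varphi$ is bounded below on $S_C$, take a minimizing sequence, extract a subsequence that converges (in an appropriate mixed sense) to a point $\hat\eta$ which is still feasible, and show $\varphi$ is sequentially lower semicontinuous along that subsequence. First, $\varphi$ is bounded below: for $\bar\eta=(\bar\xi^0,\bar u,\bar\Omega)\in S_C$ the state constraint gives $|\bar\xi^0|=|y^{\bar\eta}(0)|\le L$ and $|y^{\bar\eta}(1)|\le L$, so with $\bar\xi=(\bar\xi^0,y^{\bar\eta}(1))$ we have $|\bar\xi-\xi|\le 4L$; since $\xi=(\xi^0,y^{\eta}(1))$ lies in a bounded set, $\nabla f^0(\xi)$ is well defined by Assumption \ref{assump}(b), and as the $\bar d$-term is nonnegative, $\varphi(\bar\eta)\ge-4L\,|\nabla f^0(\xi)|$. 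Also $\varphi(\eta)=0$. Put $m:=\inf_{\bar\eta\in S_C}\varphi(\bar\eta)\in[-4L|\nabla f^0(\xi)|,\,0]$ and pick $\bar\eta_k=(\bar\xi^0_k,\bar u_k,\bar\Omega_k)\in S_C$ with $\varphi(\bar\eta_k)\to m$; from the uniform bound on $\varphi(\bar\eta_k)$ and on the linear term, $\tfrac12\,\bar d((\xi^0,u,\Omega),\bar\eta_k)$ stays bounded, so the minimizing sequence lies in a fixed $\bar d$-bounded subset of $S_C$.

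Next I would extract a suitable subsequence. The points $\bar\xi^0_k$ lie in the compact set $\mathcal{C}\cap\{x\in\R^n:|x|\le L\}$, so along a subsequence $\bar\xi^0_k\to\hat\xi^0\in\mathcal{C}$. Each $\bar u_k$ takes values in the convex compact set $\bar{\mathcal{U}}$, hence $\{\bar u_k\}$ is bounded in $L^2$; as $\mathcal{U}_C$ is convex and closed in $L_{\infty,2}^m[0,1]$, hence weakly closed, a further subsequence converges \emph{weakly} in $L^2$ to some $\hat u\in\mathcal{U}_C$. For the impulsive controls, the $\bar d$-bound forces uniform bounds on $\|\bar\mu_k\|$, on $\int_0^1|\dot{\bar\theta}_k|$ and $\int_0^1|\dot{\bar\phi}_k|$, and on the Lipschitz ranks of the graph completions $(\bar\theta_k,\bar\phi_k)$; so, reasoning as in the proof of Lemma \ref{lem} (Banach--Steinhaus, Helly's selection theorem, dominated convergence), a subsequence satisfies $\bar\Omega_k\rightarrow^{d_3}\hat\Omega\in\mathcal{P}$. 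Set $\hat\eta:=(\hat\xi^0,\hat u,\hat\Omega)\in\tilde S_C$. Since $f(\cdot,\cdot)$ is linear in $u$ and the $\bar\theta_k,\bar\phi_k$ are uniformly Lipschitz, the control-to-state map of \eqref{defy.} is sequentially continuous under this mixed convergence (strong in $\xi^0$, weak in $u$, $d_3$ in $\Omega$): in a variant of the estimates of Theorem \ref{teoimp} the control enters only linearly, tested against uniformly bounded, a.e.\ convergent coefficients, so the weak $L^2$-convergence of $\bar u_k$ suffices to pass to the limit, yielding $y^{\bar\eta_k}(\cdot)\to y^{\hat\eta}(\cdot)$ uniformly on $[0,1]$; in particular $\sup_s|y^{\hat\eta}(s)|\le L$, as in Theorem \ref{SCN}, so $\hat\eta\in S_C$.

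Finally, pass to the limit in $\varphi(\bar\eta_k)=\langle\nabla f^0(\xi),\bar\xi_k-\xi\rangle+\tfrac12\,\bar d((\xi^0,u,\Omega),\bar\eta_k)$, with $\bar\xi_k=(\bar\xi^0_k,y^{\bar\eta_k}(1))$. The linear term converges, because $\bar\xi^0_k\to\hat\xi^0$ and $y^{\bar\eta_k}(1)\to y^{\hat\eta}(1)$. In the $\bar d$-term, the $\R^n$- and $\mathcal{P}$-parts converge (triangle inequality together with $\bar\xi^0_k\to\hat\xi^0$ and $\bar\Omega_k\rightarrow^{d_3}\hat\Omega$), while the squared $L^2$-part $\|u-\bar u_k\|_{L^2}^2$ is weakly lower semicontinuous, so $\liminf_k\|u-\bar u_k\|_{L^2}^2\ge\|u-\hat u\|_{L^2}^2$. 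Hence $\varphi(\hat\eta)\le\liminf_k\varphi(\bar\eta_k)=m$, and since $\hat\eta\in S_C$ we conclude $\varphi(\hat\eta)=m=\min_{\bar\eta\in S_C}\varphi(\bar\eta)$.

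The main obstacle is the extraction in the second paragraph: $d_2$ is only the (squared) strong $L^2$ distance and $d_3$ the measure metric, so $d$-bounded subsets of $S_C$ are not compact, and one cannot get a strong-$d$ limit of the minimizing sequence. One must make do with a weak-$L^2$ limit of the controls — exploiting simultaneously the weak lower semicontinuity of $u\mapsto\|u-\cdot\|_{L^2}^2$ and the affine-in-$u$ structure of the dynamics, which keeps the control-to-state map sequentially continuous for weak convergence of the control — combine it with a Helly-type selection for the measure component, and then verify that the limit does not leave $S_C$.
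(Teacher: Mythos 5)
Your proposal is correct and follows essentially the same route as the paper's proof: the direct method with a minimizing sequence, compactness in $\mathbb{R}^n$ for the initial state, weak $L^2$ compactness plus weak closedness of the convex set $\mathcal{U}_C$ for the control, a selection argument for the measure component, sequential continuity of the control-to-state map exploiting linearity of $f$ in $u$, and weak lower semicontinuity of the $d_2$-term. Your explicit lower bound on $\varphi$ via the state constraint is a small useful addition the paper leaves implicit; otherwise the arguments coincide (the paper cites results of Karamzin et al.\ for the measure-compactness and state-convergence steps where you sketch the estimates directly).
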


\begin{proof}
Let $\alpha=\inf_{\hat{\eta} \in S_C}\varphi(\hat{\eta})$, then by the definition of infimum, there exists $\alpha_N=\varphi(\eta_N)$ so that $\alpha_N \rightarrow \alpha$ (in $\R$), and $\eta_N \in S_C$ for all $N \in \mathcal{N}$. As $\alpha_N$ is a convergent sequence in $\R$, it must exists $M>0$ so that $|\alpha_N|\leq M$ for all $N \in \mathcal{N}$, that is
$$
\langle\nabla f^0(\xi),\xi_N-\xi\rangle +\frac{1}{2}\bar{d}((\xi^0,u,\Omega),(\xi^0_N,u_N,\Omega_N))\leq M.
$$
As $\eta_N \in S_C$ for all $N \in \mathcal{N}$, we must have $\sup_{s \in [0,1]}|y^{\eta_N}(s)|\leq L$ for all $N \in \mathcal{N}$, then $\xi_N$ is uniformly bounded, that is, $\langle\nabla f^0(\xi),\xi_N-\xi\rangle$ is uniformly bounded. We have
$$
d_1(\xi^0,\xi^0_N)\leq M_1, \;\;\; d_2(u_N,u)\leq M_1 \;\; \mbox{and} \;\; d_4(\Omega_N,\Omega)\leq M_1,
$$
for some $M_1>0$.

We have some points:
\begin{itemize}
\item $d_1(\xi^0,\xi^0_N)\leq M_1 \implies |\xi^0_N|\leq M_2$, $M_2$ is some positive constant. Then there exist $\mathcal{K}_1 \subset \mathcal{N}$ and $\bar{\xi}^0 \in \mathcal{C}$ so that $\xi^0_N\rightarrow \bar{\xi}^0$;\\
\item $d_2(u_N,u)\leq M_1\implies \int_0^1|u_N(s)|^2 \leq M_3$ if we use Minkoviski's inequality, $M_3$ is some positive constant. By the sequential compactness in $L_2^m([0,1])$, there exist $\mathcal{K}_2 \subset \mathcal{K}_1$ and $\bar{u}\in L_2^m([0,1])$ so that
$$
\int_0^1\langle u_N(s)-\bar{u}(s),h(s)\rangle ds \rightarrow 0 \;\;\; \forall h \in L_2^m([0,1]), N \in \mathcal{K}_2.
$$
\end{itemize}
We need to show that $\bar{u} \in \mathcal{U}_C$. We know $u_N(s) \in \overline{\mathcal{U}}$ a.e., for all $N \in \mathcal{K}_2$. Define
$$
W:=\{\omega\in L_2^m([0,1]):\omega(t)\in \overline{\mathcal{U}} \; \mbox{a.e.} t \in [0,1]\}.
$$
Then $W$ is strongly closed in $L_2^m([0,1])$, because a strongly convergent sequence admits a subsequence converging almost everywhere, and $\overline{\mathcal{U}}$ is closed by assumption. $W$ is convex because $\overline{\mathcal{U}}$ is convex. By Theorem III.7, \cite{Brezis}, $W$ is weakly closed. As $\bar{u}$ is the weak limit of the sequence $u_N$, $\bar{u}$ belongs to $W$, and then $\bar{u} \in \mathcal{U}_C$.

\begin{itemize}
\item $d_4(\Omega_N,\Omega)\leq M_1$.\\
By a statement given by \cite{K:2005}, page 7105, there exist $\mathcal{K}_3 \subset \mathcal{K}_2$ and $\bar{\Omega} \in \mathcal{P}$ so that $d_4(\Omega_N,\bar{\Omega})\rightarrow 0, N \in \mathcal{K}_3$.
\end{itemize}

Then, when $N \in \mathcal{K}_3$, we have\\
1) $\xi^0_N \rightarrow^{d_1}\bar{\xi}^0$;\\
2) $u_N \rightarrow \bar{u}$ weakly in $L_2^m([0,1])$;\\
3) $\Omega_N \rightarrow^{d_4}\bar{\Omega}$.

By theorem 6.1, \cite{K:2005}, if $f$ is linear in $u$, 1), 2) and 3) hold and $\sup_{s \in [0,1]}|y^{\eta_N}(s)| \leq L$, we can still apply Lemma 3.2, \cite{K:2005}, and get that $y^{\eta_N}(1) \rightarrow y^{\bar{\eta}}(1)$, where $y^{\bar{\eta}}(\cdot)$ is the trajectory of the reparametrized system related to $\bar{\eta}=(\bar{\xi}^0,\bar{u},\bar{\Omega})$. Moreover, $\sup_{s \in [0,1]}|y^{\eta_N}(s)|\rightarrow \sup_{s\in[0,1]}|y^{\bar{\eta}}(s)|$, and as $\sup_{s\in[0,1]}|y^{\eta_N}(s)|\leq L$, we must have that $\sup_{s\in[0,1]}|y^{\bar{\eta}}(s)|\leq L$. Then, $\bar{\eta} \in S_C$. 

We have strong convergence in $\mathcal{C}$ and $\mathcal{P}$ but we have weak convergence in $L_2^m([0,1])$. Let us work on that.

We know that $d_2:\mathcal{U}_C\rightarrow\R$  and $\mathcal{U}_C \subset L_{\infty,2}^m([0,1])\subset L_2^m([0,1])$, where $ L_2^m([0,1])$ is a Banach space. Let $\lambda$ be so that there exists $\hat{u} \in \mathcal{U}_C$ satisfying $d_2(u,\hat{u})=\lambda$. Define
$$
A:=\{\tilde{u}\in \mathcal{U}_C:d_2(u,\tilde{u})\leq \lambda\}.
$$
As $\mathcal{U}_C$ and $d_2$ are convex, $A$ is convex.

If we take a sequence $\{\tilde{u}_N\}_{N \in \mathcal{N}}$ in $A$ so that $\tilde{u}_N\rightarrow^{d_2} \tilde{\tilde{u}}$, we must have that $\tilde{u}_N \in \mathcal{U}_C$ for all $N \in \mathcal{N}$ and $d_2(\tilde{u}_N,u)\leq \lambda$. As $d_2$ is continuous we have that $d_2(\tilde{\tilde{u}},u)\leq \lambda$. We need to show that $\tilde{\tilde{u}} \in \mathcal{U}_C$. In the same way we showed $\bar{u} \in W$, we can get that $\tilde{\tilde{u}} \in \mathcal{U}_C$. Then, $A$ is strongly closed. By Theorem III.7, \cite{Brezis}, $A$ is weakly closed. In particular we have that if $u_N \rightarrow \bar{u}$ weakly in $L_2^m([0,1])$ then
$$
d_2(u,\bar{u})\leq \liminf_{N \rightarrow\infty} d_2(u_N,u).
$$
We can write
\[\begin{array}{lll}
\vspace{0.3cm}
\varphi(\bar{\eta})=\langle \nabla f^0(\xi),\bar{\xi}-\xi\rangle +d_1(\xi^0,\bar{\xi}^0)+d_2(u,\bar{u})+d_4(\Omega,\bar{\Omega})\\
\vspace{0.3cm}
\quad \;\quad \leq \lim_{N \rightarrow\infty}\left[\langle \nabla f^0(\xi),\xi_N-\xi\rangle +d_1(\xi^0_N,\xi^0)+d_4(\Omega_N,\Omega)\right] + \liminf_{N \rightarrow\infty} d_2(u_N,u)\\
\vspace{0.3cm}
\quad \;\quad =\liminf_{N \rightarrow\infty}\left[\langle \nabla f^0(\xi),\xi_N-\xi\rangle +d_1(\xi^0_N,\xi^0)+d_2(u_N,u)+d_4(\Omega_N,\Omega)\right]\\
\quad \;\quad =\liminf_{N\rightarrow\infty} \varphi(\eta_N)=\alpha.
\end{array}\]
Therefore, $\varphi$ achieves its minimum over $S_C$.
\end{proof}

The same result can be proved when we change the domain of $\varphi$ by $S_{C,N}$.

The next result provides the optimality functions to the problems $(P_{rep})$ and $(P^{C,N}_{rep})$.

\begin{theorem}\label{otimalidade}
Suppose that Assumption \ref{assump} holds. The following statements are satisfied:
a) Let
$$
\gamma(\eta):= \min_{\bar{\eta} \in S_C}\left(\langle\nabla f^0(\xi),\bar{\xi}-\xi\rangle +\frac{1}{2}\bar{d}((\xi^0,u,\Omega),(\bar{\xi}^0,\bar{u},\bar{\Omega}))\right),
$$
with $\xi:=(\xi^0,y^{\eta}(1))$, $\bar{\xi}:=(\bar{\xi}^0,y^{\bar{\eta}}(1))$, $\bar{d}=d_1+d_2+d_4$ and $\gamma:S_C \rightarrow \mathbb{R}$.\\
i) If $\bar{\eta} \in S_C$  is a local minimizer of $(P_{rep})$, then
$$
\langle \nabla f^0(\bar{\xi}),\xi-\bar{\xi}\rangle \geq 0 \;\;\; \forall \; \eta \in S_C;
$$
ii) $\gamma(\eta) \leq 0 \;\; \forall \; \eta \in S_C$;\\
iii) If $\bar{\eta} \in S_C$ is a minimum of $(P_{rep})$, then $\gamma(\bar{\eta})=0$;\\
iv) $\gamma(\cdot)$ is upper semicontinuous.\\
Thus, we can conclude that $\gamma$ is an optimality function to the problem $(P_{rep})$.

\noindent b) Let
$$
\gamma^{C,N}(\eta)=\min_{\bar{\eta} \in S_{C,N}}\left(\langle\nabla f^0_N(\xi),\bar{\xi}-\xi\rangle+\frac{1}{2}\bar{d}((\xi^0,u,\Omega),(\bar{\xi}^0,\bar{u},\bar{\Omega}))\right),
$$
with $\gamma^{C,N}:S_{C,N} \rightarrow \mathbb{R}$.\\
i) If $\bar{\eta} \in S_{C,N}$ is a local minimizer of $(P^{C,N}_{rep})$, then
$$
\langle \nabla f^0_N(\bar{\xi}),\xi-\bar{\xi}\rangle \geq 0 \;\;\; \forall \; \eta \in S_{C,N};
$$
ii) $\gamma^{C,N}(\eta) \leq 0 \;\; \forall \; \eta \in S_{C,N}$;\\
iii) If $\bar{\eta} \in S_{C,N}$ is minimum of $(P^{C,N}_{rep})$, then $\gamma^{C,N}(\bar{\eta})=0$;\\
iv) $\gamma^{C,N}(\cdot)$ is upper semicontinuous;\\
Thus, we can conclude that $\gamma^{C,N}$ is an optimality function to the problem $(P^{C,N}_{rep})$.
\end{theorem}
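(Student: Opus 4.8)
The plan is to verify, in turn, the four properties in part a); part b) will follow by the substitutions noted at the end. Throughout, for a fixed $\eta=(\xi^{0},u,\Omega)\in S_{C}$ with $\xi=(\xi^{0},y^{\eta}(1))$, write
$$
\varphi_{\eta}(\bar\eta):=\langle\nabla f^{0}(\xi),\bar\xi-\xi\rangle+\tfrac12\,\bar d\big((\xi^{0},u,\Omega),(\bar\xi^{0},\bar u,\bar\Omega)\big),\qquad \bar\xi:=(\bar\xi^{0},y^{\bar\eta}(1)),
$$
so that $\gamma(\eta)=\min_{\bar\eta\in S_{C}}\varphi_{\eta}(\bar\eta)$, the minimum being attained at some $\hat\eta=\hat\eta(\eta)\in S_{C}$ by Lemma \ref{teodef}. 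Property ii) is then immediate: taking $\bar\eta=\eta$ forces $\bar\xi=\xi$, hence $\varphi_{\eta}(\eta)=0$ and $\gamma(\eta)\le 0$.

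For property i), I use that $(P_{rep})$ amounts to minimizing $\eta\mapsto f^{0}(\xi^{0},y^{\eta}(1))$ over $S_{C}$ and that $S_{C}$ is convex while, by the linearity of $f$ in $u$ (the same structural hypothesis exploited via \cite{K:2005} in the proof of Lemma \ref{teodef}), the endpoint $y^{\eta}(1)$ is affine along segments in the control data. So, for a local minimizer $\bar\eta$ and any $\eta\in S_{C}$, the segment $\eta_{t}:=(1-t)\bar\eta+t\eta$ lies in $S_{C}$, tends to $\bar\eta$ as $t\downarrow 0$, and its endpoint satisfies $\xi_{t}:=(\xi^{0}_{t},y^{\eta_{t}}(1))=\bar\xi+t(\xi-\bar\xi)$; since on $S_{C}$ the endpoints stay bounded ($\sup_{s}|y^{\eta}(s)|\le L$) and $f^{0}$ is $C^{1}$ there (Assumption \ref{assump} b)), local minimality gives $0\le f^{0}(\xi_{t})-f^{0}(\bar\xi)=t\langle\nabla f^{0}(\bar\xi),\xi-\bar\xi\rangle+o(t)$, and dividing by $t$ and letting $t\downarrow 0$ yields $\langle\nabla f^{0}(\bar\xi),\xi-\bar\xi\rangle\ge 0$ for all $\eta\in S_{C}$. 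Property iii) is then obtained by combining i) and ii): a minimum $\bar\eta$ of $(P_{rep})$ is in particular a local minimizer, so every competitor $\bar\eta'$ in the minimum defining $\gamma(\bar\eta)$ satisfies $\langle\nabla f^{0}(\bar\xi),\bar\xi'-\bar\xi\rangle\ge 0$ and $\bar d(\bar\eta,\bar\eta')\ge 0$, hence $\gamma(\bar\eta)\ge 0$; with ii), $\gamma(\bar\eta)=0$.

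For property iv), let $\eta_{N}\to\eta$ in $(S_{C},d)$; I must show $\limsup_{N}\gamma(\eta_{N})\le\gamma(\eta)$. Fixing $\hat\eta$ with $\gamma(\eta)=\varphi_{\eta}(\hat\eta)$ and $\hat\xi=(\hat\xi^{0},y^{\hat\eta}(1))$, the fact that $\hat\eta\in S_{C}$ competes in the minimum defining $\gamma(\eta_{N})$ gives
$$
\gamma(\eta_{N})\le\varphi_{\eta_{N}}(\hat\eta)=\langle\nabla f^{0}(\xi_{N}),\hat\xi-\xi_{N}\rangle+\tfrac12\,\bar d(\eta_{N},\hat\eta),\qquad \xi_{N}:=(\xi^{0}_{N},y^{\eta_{N}}(1)).
$$
I then pass to a subsequence along which $\gamma(\eta_{N})\to\limsup_{N}\gamma(\eta_{N})$ and, by Proposition \ref{solcont} a), to a further subsequence along which $y^{\eta_{N}}(\cdot)\to y^{\eta}(\cdot)$ uniformly, so that $\xi_{N}\to\xi$ there. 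Along it, $\nabla f^{0}(\xi_{N})\to\nabla f^{0}(\xi)$ (the $\xi_{N}$ stay bounded and $f^{0}$ is $C^{1}$ on bounded sets, Assumption \ref{assump} b)) and $\bar d(\eta_{N},\hat\eta)\to\bar d(\eta,\hat\eta)$ by continuity of $\bar d=d_{1}+d_{2}+d_{4}$ (here $d_{1},d_{4}$ are metrics and $d_{2}$ is continuous since $\|u_{N}-\hat u\|_{2}\to\|u-\hat u\|_{2}$); taking the limit of the right-hand side gives $\limsup_{N}\gamma(\eta_{N})\le\varphi_{\eta}(\hat\eta)=\gamma(\eta)$. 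With i)--iii), this shows $\gamma$ is an optimality function for $(P_{rep})$.

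Part b) is proved by running the same four steps with $f^{0}$, $y^{\eta}(\cdot)$, $S_{C}$, Lemma \ref{teodef} and Proposition \ref{solcont} a) replaced, respectively, by $f^{0}_{N}$ (so $\nabla f^{0}_{N}=\nabla f^{0}$), the Euler polygonal $y^{\eta}_{N}(\cdot)$ of \eqref{polig}, $S_{C,N}$, the $S_{C,N}$-version of Lemma \ref{teodef} recorded after its proof, and --- for the upper semicontinuity of $\gamma^{C,N}$ on the \emph{fixed} set $S_{C,N}$ --- the continuous dependence of the solution of \eqref{discy} on $\eta\in S_{C,N}$, which is immediate from the recursion and the continuity of $f$ and $g$. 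I expect the only genuinely delicate point to be property i): it rests on the reachable endpoint set behaving convexly/affinely in $\eta$, so the linearity of $f$ in $u$ and the convexity of the data ($\mathcal{C},\mathcal{U}_{C},\mathcal{P}$, resp. $\mathcal{C}_{N},L_{N}^{m},\mathcal{P}_{N}$) enter in an essential way; once that is granted, ii)--iv) are routine, the only care needed being the subsequence extraction in iv) through Proposition \ref{solcont}.
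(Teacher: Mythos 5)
Your items ii) and iii) coincide with the paper's argument, and your item iv) is a correct (in fact cleaner) variant of it: instead of asserting continuity of $\gamma$ along the subsequence, you bound $\gamma(\eta_N)\le\varphi_{\eta_N}(\hat\eta)$ using the minimizer $\hat\eta$ supplied by Lemma \ref{teodef} as a fixed competitor and pass to the limit via Proposition \ref{solcont}; this yields exactly the upper semicontinuity inequality and uses the same ingredients as the paper.

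Item i), however, has a genuine gap. You derive the variational inequality by moving along the segment $\eta_t=(1-t)\bar\eta+t\eta$ and asserting (1) that $\eta_t\in S_C$ and (2) that the endpoint map is affine, i.e.\ $y^{\eta_t}(1)=y^{\bar\eta}(1)+t\,(y^{\eta}(1)-y^{\bar\eta}(1))$. Neither claim is justified, and both fail in general here. For (2): even though $f(x,u)$ is linear in $u$, the dynamics $\dot y=f(y,u)\dot\theta+g(y)\dot\phi$ are nonlinear in the state ($f$ and $g$ are only $C^1$/Lipschitz in $x$), so $\eta\mapsto y^{\eta}(1)$ is not affine along segments; moreover a ``convex combination'' of two impulsive controls $\Omega$ does not produce graph completions $(\theta,\phi)$ that combine affinely. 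For (1): $S_C$ is cut out by the constraint $\sup_{s}|y^{\eta}(s)|\le L$ on a nonlinear function of $\eta$, so convexity of $\tilde S_C$ does not give convexity of $S_C$. The paper avoids both issues by invoking Assumption \ref{assump}~c) (controllability): since $\bar\xi+\bar\lambda(\xi-\bar\xi)\in\mathcal{C}\times\R^n$, controllability furnishes \emph{some} admissible process in $S_C$ whose endpoint pair equals this intermediate point, so its cost $f^0(\bar\xi+\bar\lambda(\xi-\bar\xi))<f^0(\bar\xi)$ is attained by a feasible competitor, contradicting minimality. Without controllability (or a genuine convexity/affinity property of the reachable endpoint set, which you would have to prove), the value $f^0(\xi_t)$ need not be a feasible value of $(P_{rep})$ and no contradiction with minimality of $\bar\eta$ follows. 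The same repair is needed for b)~i), where the discrete reachable set is likewise not affine in $\eta$.
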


\begin{proof}
a). i) Suppose that $\bar{\eta} \in S_C$ is a minimizer of $(P_{rep})$ and there exists $\eta \in S_C$ so that
\begin{equation}\label{*}
\langle \nabla f^0(\bar{\xi}),\xi-\bar{\xi} \rangle <0.
\end{equation}
As $\mathcal{C}$ is convex, $\bar{\xi}+\lambda(\xi-\bar{\xi}) \in \;\mathcal{C} \times \mathbb{R}^n \;\; \forall \; \lambda \in [0,1]$. There exists $\bar{\lambda} \in (0,1]$ such that
$$
f^0(\bar{\xi}+\bar{\lambda}(\xi-\bar{\xi}))-f^0(\bar{\xi}) \leq \bar{\lambda}\langle \nabla f^0(\bar{\xi}),\xi-\bar{\xi}\rangle <0,
$$
where in the last inequality we used the inequality \eqref{*}. 

Let $\xi^0=\bar{\xi}^0+\bar{\lambda}(\xi^0-\bar{\xi}^0)$ and $\xi_1=y^{\bar{\eta}}(1)+\bar{\lambda}(y^{\eta}(1)-y^{\bar{\eta}}(1))$ be the fixed initial and final points of the reparametized system \eqref{defy.}. As such system is controllable it must exists a solution of the reparametrized system $y(\cdot)$ satisfying $y(0)=\xi_0$ and $y(1)=\xi_1$, i.e.,
$$
f^0(\bar{\xi}+\bar{\lambda}(\xi-\bar{\xi}))<f^0(\bar{\xi}).
$$ 
This is a contradiction with the assumption.

\noindent ii) Note that
$$
\gamma(\eta) \leq \left(\langle\nabla f^0(\xi),\xi-\xi\rangle +\frac{1}{2}\bar{d}((\xi^0,u,\Omega),(\xi^0,u,\Omega))\right)=0 \;\;\; \forall \; \eta \in S_C,
$$
because as $\gamma(\eta)$ is the minimum, it is smaller than when calculated in $\eta$.

\noindent iii) Suppose that $\bar{\eta} \in S_C$ is a minimizer of $(P_{rep})$. We have
$$
0 \geq \gamma(\bar{\eta}) \geq \min_{\eta \in S_C}\langle \nabla f^0(\bar{\xi}),\xi-\bar{\xi}\rangle\geq 0,
$$
where in the first and third inequality we used the items ii) and i), respectively.

\noindent iv) Let $\{\eta_N\}_{N \in \mathcal{N}}$ be a sequence of $S_C$ so that $\eta_N \rightarrow^d \eta, \;N \in \mathcal{N}$, and $\mathcal{K} \subset \mathcal{N}$ such that
$$
\overline{\lim}\gamma(\eta_N)=\lim_{N \in \mathcal{K}}\gamma(\eta_N).
$$
By Proposition \ref{solcont} part a), there exists $\bar{\mathcal{K}} \subset \mathcal{K}$ such that $y_{N}^{\eta_N}(1) \rightarrow y^{\eta}(1)$ whenever $N \in \bar{\mathcal{K}}$. Then, by the definition of $\gamma(\cdot)$ and Assumption \ref{assump}, it follows that $\lim_{N \in \bar{\mathcal{K}}}\gamma(\eta_N)=\gamma(\eta)$. But, as the limit $\lim_{N \in \mathcal{K}}\gamma(\eta_N)$ exists, we must have that all subsequence are converging to the same point, that is,
$$
\overline{\lim}\gamma(\eta_N)=\gamma(\eta).
$$

Part b) is totally analogous to the part a).
\end{proof}

\begin{theorem}\label{teo2}
Suppose that Assumption \ref{assump} holds. Then, $\{(P_{rep}^{C,N},\gamma^{C,N})\}_{N \in \mathcal{N}}$ is a sequence of consistent approximations to the pair $(P_{rep},\gamma)$.
\end{theorem}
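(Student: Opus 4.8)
The plan is to verify directly the two requirements defining a sequence of consistent approximations (part iii of Definition~1): that $(P_{rep}^{C,N})$ epi-converges to $(P_{rep})$, and that $\overline{\lim}_{N\to\infty}\gamma^{C,N}(\eta_N)\le\gamma(\eta)$ whenever $\eta_N\in S_{C,N}$ and $\eta_N\rightarrow^{d}\eta\in S_C$. The ingredient used throughout is an upgrade of Theorem~\ref{teoimp} from subsequential to full convergence: every subsequence of $\{y^{\eta_N}_N\}$ contains, by (the proof of) Theorem~\ref{teoimp}, a further subsequence converging uniformly on $[0,1]$ to a solution of \eqref{defy.} with data $\eta$, and that solution is unique by Theorem~\ref{unicidade}, so the \emph{whole} sequence $y^{\eta_N}_N$ converges uniformly to $y^{\eta}$; the same remark applies to the variants in Proposition~\ref{solcont} and Observation~\ref{obs5}. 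Since $f^0$ is continuous this yields $f^0_N(y^{\eta_N}_N(0),y^{\eta_N}_N(1))=f^0(\xi^0_N,y^{\eta_N}_N(1))\rightarrow f^0(y^{\eta}(0),y^{\eta}(1))$ along the full index set, and in particular $\xi^0_N\rightarrow\xi^0$ and $y^{\eta_N}_N(1)\rightarrow y^{\eta}(1)$.

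For the epi-convergence, condition (a) follows by taking, for a given $\eta\in S_C$, a sequence $\eta_N\in S_{C,N}$ with $\eta_N\rightarrow^{d}\eta$ supplied by $S_C\subset\underline{\lim}\,S_{C,N}$ (Theorem~\ref{SCN}); by the previous paragraph the cost converges to $f^0(y^{\eta}(0),y^{\eta}(1))$, so the $\overline{\lim}$ equals this value. For condition (b), if $\eta_N\in S_{C,N}$ and $\eta_N\rightarrow^{d}\eta$ along an infinite $\mathcal{K}\subset\mathcal{N}$, then $\eta\in S_C$ because $\overline{\lim}\,S_{C,N}\subset S_C$ (Theorem~\ref{SCN}), and again the cost converges to $f^0(y^{\eta}(0),y^{\eta}(1))$, so the $\underline{\lim}$ equals this value. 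Hence $P_{rep}^{C,N}\rightarrow^{Epi}P_{rep}$.

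For the optimality-function condition, fix $\eta_N\in S_{C,N}$ with $\eta_N\rightarrow^{d}\eta\in S_C$. By Lemma~\ref{teodef} there is $\hat{\eta}=(\hat{\xi}^0,\hat{u},\hat{\Omega})\in S_C$ attaining the minimum defining $\gamma(\eta)$, and by Theorem~\ref{SCN} ($S_C\subset\underline{\lim}\,S_{C,N}$) there is $\hat{\eta}_N\in S_{C,N}$ with $\hat{\eta}_N\rightarrow^{d}\hat{\eta}$. Using $\hat{\eta}_N$ as a feasible competitor in the minimum defining $\gamma^{C,N}(\eta_N)$, and recalling $f^0_N$ and $f^0$ agree as functions so $\nabla f^0_N=\nabla f^0$, we get
$$
\gamma^{C,N}(\eta_N)\le\langle\nabla f^0(\xi_N),\hat{\xi}_N-\xi_N\rangle+\tfrac{1}{2}\bar{d}\big((\xi^0_N,u_N,\Omega_N),(\hat{\xi}^0_N,\hat{u}_N,\hat{\Omega}_N)\big),
$$
with $\xi_N=(\xi^0_N,y^{\eta_N}_N(1))$ and $\hat{\xi}_N=(\hat{\xi}^0_N,y^{\hat{\eta}_N}_N(1))$. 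The trajectory convergence gives $\xi_N\rightarrow\xi$ and $\hat{\xi}_N\rightarrow\hat{\xi}$; since $f^0$ is $C^1$ on bounded sets (part b) of Assumption~\ref{assump}) we get $\nabla f^0(\xi_N)\rightarrow\nabla f^0(\xi)$; and for the metric term $d_1(\xi^0_N,\hat{\xi}^0_N)\rightarrow d_1(\xi^0,\hat{\xi}^0)$ trivially, $d_2(u_N,\hat{u}_N)\rightarrow d_2(u,\hat{u})$ because $u_N\rightarrow u$ and $\hat{u}_N\rightarrow\hat{u}$ strongly in $L_2^m[0,1]$, and $d_4(\Omega_N,\hat{\Omega}_N)\rightarrow d_4(\Omega,\hat{\Omega})$ because $d_3$-convergence implies $d_4$-convergence and $d_4$ is a metric. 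Taking $\overline{\lim}$ gives $\overline{\lim}_{N\to\infty}\gamma^{C,N}(\eta_N)\le\langle\nabla f^0(\xi),\hat{\xi}-\xi\rangle+\tfrac{1}{2}\bar{d}((\xi^0,u,\Omega),(\hat{\xi}^0,\hat{u},\hat{\Omega}))=\gamma(\eta)$, the required inequality. Together with the epi-convergence this proves the theorem.

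I expect the main obstacle to be the last limit passage: it requires simultaneously the uniqueness-upgraded uniform convergence of the Euler arcs (so that both $\xi_N\rightarrow\xi$ and $\hat{\xi}_N\rightarrow\hat{\xi}$), the $C^1$ regularity of $f^0$ on bounded sets, and the joint continuity of $\bar{d}=d_1+d_2+d_4$ along the relevant modes of convergence --- the delicate point being that $d$-convergence encodes only strong $L_2$ convergence of the control component and $d_4$-convergence (through $d_3$) of the measure component, so one must verify these actually suffice for the $d_2$ and $d_4$ terms to pass to the limit. The epi-convergence part is comparatively routine once Theorem~\ref{SCN} and Theorem~\ref{teoimp} are available.
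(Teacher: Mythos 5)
Your proposal is correct and follows essentially the same route as the paper: epi-convergence via Theorem~\ref{SCN} and the uniform trajectory convergence of Theorem~\ref{teoimp}/Observation~\ref{obs5}, and the $\gamma$-inequality via the minimizer $\hat{\eta}$ from Lemma~\ref{teodef}, an approximating sequence $\hat{\eta}_N\in S_{C,N}$, and passage to the limit in the competitor bound for $\gamma^{C,N}(\eta_N)$. The only cosmetic difference is that you upgrade the subsequential convergence of the Euler arcs to full-sequence convergence via uniqueness (Theorem~\ref{unicidade}), whereas the paper extracts a subsequence realizing the $\overline{\lim}$/$\underline{\lim}$ and then a further convergent subsequence --- the two devices are interchangeable here, and your explicit verification that the $d_1$, $d_2$, $d_4$ terms pass to the limit is a welcome bit of extra care.
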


\begin{proof}
To get this result, we need to show that the problems $(P_{rep}^{C,N})$ epi-converge to $(P_{rep})$ and $\overline{\lim}\gamma^{C,N}(\eta_N) \leq \gamma(\eta)$, when $\eta_N \rightarrow^d \eta$, $N \rightarrow \infty$, $N \in \mathcal{N}$.\\
\noindent Epi-convergence;\\
i) Let $\eta \in S_C$ be arbitrary. By the construction of $S_{C,N}$ itself and by the proof of Theorem \ref{SCN}, there exists $\{\eta_{N}\}_{N \in \mathcal{N}}$ such that $\eta_{N} \in S_{C,N}$, for all $N \in \mathcal{N}$, and $\eta_{N} \rightarrow^d \eta$, $N \rightarrow \infty$. Let $\mathcal{K} \subset \mathcal{N}$ be such that
$$
\overline{\lim}f^0_N(\xi^0_N,y^{\eta_N}_N(1))=\lim_{N \in \mathcal{K}}f^0_N(\xi^0_N,y^{\eta_N}_N(1)).
$$
By Observation \ref{obs5}, there exist $\mathcal{K}^{'} \subset \mathcal{K}$ such that $y^{\eta_N}_N(1) \rightarrow y^{\eta}(1)$, $N \in \mathcal{K}^{'}$. Then, we have
$$
\lim_{N \in \mathcal{K}^{'}}f^0_N(\xi^0_N,y^{\eta_N}_N(1))=f^0(\xi^0,y^{\eta}(1)),
$$
because of Assumption \ref{assump}. It follows that
$$
\lim_{N \in \mathcal{K}}f^0_N(\xi^0_N,y^{\eta_N}_N(1))=f^0(\xi^0,y^{\eta}(1)),
$$
because if the limit exist, all subsequences must converge to the same point. This gives us
$$
\overline{\lim}f^0_N(\xi^0_N,y^{\eta_N}_N(1))=f^0(\xi^0,y^{\eta}(1)).
$$

\noindent ii) Let $\{\eta_N\}_{N \in \mathcal{\mathcal{K}}}$ be a sequence such that $\eta_N \in S_{C,N}$ for all $N \in \mathcal{K}$ and $\eta_N \rightarrow^d \eta$, $N \rightarrow \infty$. By Theorem \ref{SCN}, we should have $\eta \in S_C$.
Take $\bar{\mathcal{K}} \subset \mathcal{K}$ such that
$$
\underline{\lim}_{N \in\mathcal{K}}f^0(\xi_N^0,y_N^{\eta_N}(1))=\lim_{N \in \bar{\mathcal{K}}}f^0(\xi_N^0,y^{\eta_N}_N(1)).
$$
As $\eta_N \rightarrow^{d} \eta$, $N \in \bar{\mathcal{K}}$, it follows from Observation \ref{obs5} that there exist $\bar{\bar{\mathcal{K}}} \subset \bar{\mathcal{K}}$ such that $y^{\eta_N}_N(1) \rightarrow y^{\eta}(1)$, $N \in \bar{\bar{\mathcal{K}}}$. By the same arguments used in the proof of the item i), it follows that $\underline{\lim}_{N \in\mathcal{K}}f^0(\xi_N^0,y_N^{\eta_N}(1))=f^0(\xi^0,y^{\eta}(1))$.
$$
\therefore P^{C,N}_{rep} \rightarrow^{Epi} P_{rep}.
$$

\noindent Now, let $\{\eta_N\}_{N \in \mathcal{N}}$ be a sequence such that $\eta_N \in S_{C,N}$ for all $N \in \mathcal{N}$ and it converges to $\eta \in S_C$. We must show that $\overline{\lim}\gamma^{C,N}(\eta_N) \leq \gamma(\eta)$, $N \rightarrow \infty$.
By Theorem Lemma \ref{teodef}, there exists $\bar{\eta} \in S_C$ such that
$$
\gamma(\eta)=\langle\nabla f^0(\xi^0,y^{\eta}(1)),(\bar{\xi}^0,y^{\bar{\eta}}(1))-(\xi^0,y^{\eta}(1))\rangle+\frac{1}{2}\bar{d}((\xi^0,u,\Omega),(\bar{\xi}^0,\bar{u},\bar{\Omega})).
$$
Let $\overline{\mathcal{K}} \subset \mathcal{N}$ be such that $\overline{\lim}\gamma^{C,N}(\eta_N)= \lim_{N \in \overline{\mathcal{K}}}\gamma^{C,N}(\eta_N)$. By Lemma \ref{teo1}, there exists $\{\bar{\eta}_N\}_{N \in \overline{\mathcal{K}}}$ so that $\bar{\eta}_N \rightarrow \bar{\eta}$, and $\bar{\eta}_N \in S_{C,N}$ for all $N \in \overline{\mathcal{K}}$. By the definition of $\gamma^{C,N}(\cdot)$,
\[\begin{array}{lll}
\vspace{0.3cm}
\gamma^{C,N}(\eta_N)\leq \langle \nabla f^0(\xi^0_N,y^{\eta_N}_N(1)),(\xi^0_N,y^{\eta_N}_N(1))-(\bar{\xi}^0_N,y^{\bar{\eta}_N}_N(1))\rangle\\
\qquad \qquad \qquad +\frac{1}{2}\bar{d}((\xi^0_N,u_N,\Omega_N),(\bar{\xi}^0_N,\bar{u}_N,\bar{\Omega}_N)).
\end{array}\]
By Observation \ref{obs5}, there exists $\mathcal{K} \subset \overline{\mathcal{K}}$ such that $y^{\eta_N}_N(1) \rightarrow y^{\eta}(1)$, and $y^{\bar{\eta}_N}_N(1) \rightarrow y^{\bar{\eta}}(1)$, $N \in \mathcal{K}$, then passing the limit with $N \rightarrow \infty$, $N \in \overline{\mathcal{K}}$, in the last inequality, we get
\[\begin{array}{lll}
\vspace{0.3cm}
\lim_{N \in \mathcal{K}}\gamma^{C,N}(\eta_N) \leq \langle\nabla f^0(\xi^0,y^{\eta}(1)),(\xi^0,y^{\eta}(1))-(\bar{\xi}^0,y^{\bar{\eta}}(1))\rangle\\
\quad \qquad \qquad \qquad\quad\quad +\frac{1}{2}\bar{d}((\xi^0,u,\Omega),(\bar{\xi}^0,\bar{u},\bar{\Omega}))=\gamma(\eta),
\end{array}\]
which give us
$$
\overline{\lim}\gamma^{C,N}(\eta_N) \leq \gamma(\eta).
$$
\end{proof}

The next Theorem shows that a local (respectively, global) minimizers sequence of $(P^{C,N}_{rep})$ that has a convergent subsequence is converging to a local (respectively, global) minimizer of $(P_{rep})$.

\begin{theorem}\label{teo8}
Let $(P^{C,N}_{rep})$ and $(P_{rep})$ be defined as before. Let $\{\eta_N\}_{N \in \mathcal{N}}$ be a sequence of local (respectively, global) minimizers of $(P^{C,N}_{rep})$ such that $\eta_N \rightarrow^{d} \eta$, with $N \rightarrow \infty$ and $\eta \in S_C$. Then $\eta$ is a local (respectively, global) minimizer of $(P_{rep})$ and there exists $\mathcal{K} \subset \mathcal{N}$ such that $f^0_N(\xi^0_{N},y^{{\eta}_N}_N(1)) \rightarrow f^0(\xi^0,y^{\eta}(1))$, with $N \rightarrow \infty$, $N \in \mathcal{K}$.
\end{theorem}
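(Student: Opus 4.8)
The plan is to reduce the statement to the consistent-approximation machinery already assembled in Theorem \ref{teo2}, which in particular gives that $(P^{C,N}_{rep})$ epi-converges to $(P_{rep})$; what then remains is the classical transfer-of-minimizers argument behind Theorem 3.3.3 of \cite{Polak}. For brevity write $J_N(\eta):=f^0_N(y^{\eta}_N(0),y^{\eta}_N(1))$ for $\eta\in S_{C,N}$ and $J(\eta):=f^0(y^{\eta}(0),y^{\eta}(1))$ for $\eta\in S_C$. From the proof of Theorem \ref{teo2} I will use two facts: (a) for every $\eta'\in S_C$ there is a recovery sequence $\eta'_N\in S_{C,N}$ with $\eta'_N\rightarrow^{d}\eta'$ and $\overline{\lim}_N J_N(\eta'_N)\le J(\eta')$; and (b) for every sequence $\eta_N\in S_{C,N}$ with $\eta_N\rightarrow^{d}\eta$ one has $\eta\in S_C$ and $\underline{\lim}_N J_N(\eta_N)\ge J(\eta)$.

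\emph{Global case.} First I would fix an arbitrary competitor $\eta'\in S_C$ and take the recovery sequence $\eta'_N$ from (a). Since $\eta'_N\in S_{C,N}$ and $\eta_N$ globally minimizes $J_N$ over $S_{C,N}$, $J_N(\eta_N)\le J_N(\eta'_N)$ for all $N$, so $\overline{\lim}_N J_N(\eta_N)\le\overline{\lim}_N J_N(\eta'_N)\le J(\eta')$. Combined with (b) this yields $J(\eta)\le\underline{\lim}_N J_N(\eta_N)\le\overline{\lim}_N J_N(\eta_N)\le J(\eta')$; since $\eta'$ is arbitrary, $\eta$ minimizes $(P_{rep})$. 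Choosing $\eta'=\eta$ in the same chain forces $\underline{\lim}_N J_N(\eta_N)=\overline{\lim}_N J_N(\eta_N)=J(\eta)$, so $J_N(\eta_N)\rightarrow J(\eta)$ over the full sequence, in particular along any $\mathcal{K}\subset\mathcal{N}$.

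\emph{Local case.} Here I would argue by contradiction. If $\eta$ were not a local minimizer of $(P_{rep})$, there would be $\zeta_j\rightarrow^{d}\eta$ in $S_C$ with $J(\zeta_j)<J(\eta)$ for all $j$. Since $\eta_N$ is a local minimizer of $(P^{C,N}_{rep})$, fix $\rho_N>0$ with $J_N(\eta_N)\le J_N(\zeta)$ for all $\zeta\in S_{C,N}\cap B(\eta_N,\rho_N)$. Running (a) on each $\zeta_j$ and diagonalizing against $\eta_N\rightarrow^{d}\eta$, I would extract $\zeta_{j(N),N}\in S_{C,N}$ with $j(N)\rightarrow\infty$, $\zeta_{j(N),N}\rightarrow^{d}\eta$ and $\overline{\lim}_N J_N(\zeta_{j(N),N})\le\lim_j J(\zeta_j)<J(\eta)$. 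Provided these points eventually lie in $B(\eta_N,\rho_N)$, local minimality of $\eta_N$ gives $\overline{\lim}_N J_N(\eta_N)\le\overline{\lim}_N J_N(\zeta_{j(N),N})<J(\eta)$, contradicting (b). Hence $\eta$ is a local minimizer, and $J_N(\eta_N)\rightarrow J(\eta)$ follows as in the global case, applying (a) with $\eta'=\eta$ and again keeping the recovery point in the ball.

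\emph{Main obstacle.} The analytic work — converting $d$-convergence of $\eta_N$ into uniform convergence of the Euler polygonals $y^{\eta_N}_N(\cdot)$ and hence into convergence of $J_N(\eta_N)$ — is entirely contained in Theorem \ref{teoimp}, Observation \ref{obs5} and Assumption \ref{assump}, and was already spent proving (a)--(b); so the global part is basically immediate. The genuine difficulty is the highlighted step ``$\zeta_{j(N),N}\in B(\eta_N,\rho_N)$ eventually'': the radii $\rho_N$ may collapse to $0$, and then no fixed competitor — nor the recovery sequence for $\eta$ itself — is guaranteed to sit in $B(\eta_N,\rho_N)$ for large $N$. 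This is exactly why epi-convergence alone does not preserve local minimality. I would close the gap as in \cite{Polak}, by reading ``local minimizer'' in the hypothesis as ``local minimizer of a fixed radius $\rho>0$'' (equivalently, assuming $\eta$ is an isolated local minimizer of $(P_{rep})$), which makes the competitor admissible for all large $N$ and lets the argument above go through as written.
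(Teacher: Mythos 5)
Your proposal is correct and follows essentially the same route as the paper: epi-convergence (established in Theorem \ref{teo2}) together with the standard transfer-of-minimizers contradiction, comparing $\eta_N$ against a recovery sequence for a strictly better competitor of $\eta$. The uniform-radius caveat you flag in the local case is precisely the reading the paper adopts implicitly --- its proof fixes a single $\varepsilon>0$ valid for all $N$ in the definition of local minimality of the $\eta_N$, so that the recovery points $\bar{\eta}_N$ land in the ball $d(\bar{\eta}_N,\eta_N)\leq\varepsilon$ by the triangle inequality --- hence your proposed fix coincides with what is actually done there.
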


\begin{proof}
Let $\{\eta_N\}_{N \in \mathcal{N}}$ be a sequence of local minimizers of the problems  $(P^{C,N}_{rep})$, that is, there exists $\varepsilon >0$ so that for all $\hat{\eta} \in S_{C,N}$ satisfying $d(\eta_N,\hat{\eta})\leq \varepsilon$ we have $f^0_N(\xi^0_N,y^{\eta_N}_N(1))\leq f^0_N(\hat{\xi}^0,y^{\hat{\eta}}(1))$, such that $\eta_N \rightarrow^{d} \eta$. By Observation \ref{obs5} and Assumption \ref{assump}, there exists $\mathcal{K} \subset \mathcal{N}$ such that $y^{\eta_N}_N \rightarrow y^{\eta}$ uniformly and $f^0_N(\xi^0_N,y^{\eta_N}_N(1)) \rightarrow f^0(\xi^0,y^{\eta}(1))$, with $N \in \mathcal{K}$, $N \rightarrow \infty$.
We need to show that $\eta$ is a local minimizer to the problem $(P_{rep})$. Let's suppose that $\eta$ is not a local minimizer, then given $\varepsilon>0$ there exists $\bar{\eta} \in S_C$ with $d(\eta,\bar{\eta})<\varepsilon/3$ such that
$$
f^0(\bar{\xi}^0,y^{\bar{\eta}}(1))=f^0(\xi^0,y^{\eta}(1))-3\varepsilon,
$$
where $y^{\bar{\eta}}(\cdot)$ is the associated trajectory to $\bar{\eta}$.\\
As $(P^{C,N}_{rep})$ epi-converge to $(P_{rep})$ and $\eta_N \rightarrow^{d} \eta$ with $\eta_N \in S_{C,N}$, we have
\begin{equation}\label{eq10}
\underline{\lim}_{N \in \mathcal{K}}f_N^0(\xi^0_N,y^{\eta_N}_N(1)) \geq f^0(\xi^{0},y^{\eta}(1)).
\end{equation}
By the epi-convergence again, there exists a sequence $\{\bar{\eta}_N\}_{N \in \mathcal{N}}$ in $S_{C,N}$ such that $\bar{\eta}_N \rightarrow^{d} \bar{\eta}$ and
\begin{equation}\label{eq50}
\overline{\lim}_{N \in \mathcal{K}} f^0_N(\bar{\xi}^0_N,y^{\bar{\eta}_N}_N(1)) \leq \overline{\lim}f^0_N(\bar{\xi}^0_N,y^{\bar{\eta}_N}_N(1))\leq f^0(\bar{\xi}^0,y^{\bar{\eta}}(1)).
\end{equation}
Let $\mathcal{K}^{'} \subset \mathcal{K}$ be such that
$$
\underline{\lim}_{N \in \mathcal{K}}f_N^0(\xi_N^0,y^{\eta_N}(1))=\lim_{N \in \mathcal{K}^{'}}f_N^0(\xi^0_N,y^{\eta_N}(1)).
$$
As $\eta_N\rightarrow^d \eta$, given $\varepsilon/3$ there exists $N_1 \in \mathcal{K}^{'}$ so that $d(\eta_N,\eta)\leq \varepsilon/3$ for all $N \geq N_1$, $N \in \mathcal{K}^{'}$. As $\bar{\eta}_N\rightarrow^d \bar{\eta}$, given $\varepsilon/3$ there exists $N_2 \in \mathcal{K}^{'}$ so that $d(\bar{\eta}_N,\bar{\eta})<\varepsilon/3$ for all $N \geq N_2$, $N \in \mathcal{K}^{'}$. Let $N_3=\max\{N_1,N_2\}$ and $N\geq N_3$, $N \in \mathcal{K}^{'}$, then
$$
d(\bar{\eta}_N,\eta_N)\leq d(\eta_N,\eta)+d(\eta,\bar{\eta})+d(\bar{\eta},\bar{\eta}_N)\leq \varepsilon/3+\varepsilon/3+\varepsilon/3=\varepsilon,
$$
and there exists $N_4 \in \mathcal{K}^{'}$ so that for all $N \geq N_4$, $N \in \mathcal{K}^{'}$,
$$
\eqref{eq50} \Rightarrow f_N^0(\bar{\xi}_N^0,y^{\bar{\eta}_N}(1)) \leq f^0(\bar{\xi}^0,y^{\bar{\eta}}(1)) + \varepsilon = f^0(\xi^0,y^{\eta}(1)) -2\varepsilon.
$$
$$
\eqref{eq10} \Rightarrow f_N^0(\xi_N^0,y^{\eta_N}_N(1)) \geq f^0(\xi^0,y^{\eta}(1))-\varepsilon.
$$
It follows that
$$
f_N^0(\bar{\xi}_N^0,y^{\bar{\eta}_N}_N(1)) \leq f_N^0(\xi_N^0,y^{\eta_N}_N(1))-\varepsilon
$$
for all $N\geq N_0$, $N \in \mathcal{K}^{'}$, where $N_0=\max\{N_3,N_4\}$. Contradiction.
\end{proof}

Suppose $\{\eta_N\}_{N \in \mathbb{N}} \subset S_{C,N}$ is a sequence of global minimizers of $(P^{C,N}_{rep})$ that is converging to $\eta \in S_C$ in the metric $d$. By Theorem \ref{teo8}, $\eta$ is a global minimizer of $(P_{rep})$. Then $y^{\eta}(\cdot)$ given by \eqref{defy.} is the function that minimizes $(P_{rep})$.

Define $\pi:[0,T]\rightarrow [0,1]$ by
$$
\pi(t)=\frac{t+|\mu|([0,t])}{T+\|\mu\|},
$$
and $x:[0,T]\rightarrow \R^n$ by
$$
x(t)=y^{\eta}(\pi(t)).
$$
Because of Theorem \ref{teo9}, as $y^{\eta}(\cdot)$ is a solution of the system \eqref{defy.}, then $x(\cdot)$ is a solution of the original system \eqref{eq} related to $p=(\mu,|\mu|, \psi_{t_i})$ and $\bar{u}:[0,T] \rightarrow \R^m$ given by
$$
\bar{u}(t)=u(\pi(t)).
$$
As $y^{\eta}(\cdot)$ minimizes the reparametrized problem $(P_{rep})$ and $f^0(\xi^0,x(T))=f^0(\xi^0,y^{\eta}(1))$, we have that $x(\cdot)$ minimizes $(P)$.

Now, we can show that a subsequence of discrete-time approximated functions graph-converges to a solution. 

\begin{theorem}
Suppose $\eta_N \rightarrow^d \eta$ and define
$$
\Lambda^N:=\{(t_k,y_k):k=0,...,N\},
$$
and
$$
\mathbb{X}_{\mu}:=(x(\cdot),\phi(\cdot),\{\mathcal{X}_{t_i}\}_{t_i\in \Theta}),
$$
where $y_k=y_N^{\eta_N}(s_k)$, $t_k=\theta_N(s_k)$, $k=0,...,N$, $x(\cdot)$ is defined as above and
$$
\mbox{gr}\mathbb{X}_{\mu}:=\{(t,x(t)):t \in [0,T]\} \cup \{(t_i,y(s)):s \in I_i,i\in \mathcal{I}\}.
$$
Then, there exists $\mathcal{K}\subset \mathcal{N}$ so that
$$
\mbox{dist}_H(\Lambda^N,\mbox{gr}\mathbb{X}_{\mu})\rightarrow 0 \;\;\; \mbox{as} \;\; N\rightarrow \infty, N \in \mathcal{K},
$$
where the Hausdorff distance between two compact subsets $A,B \subset \R^m$ is given by
$$
\mbox{dist}_H(A,B)=\min\{\delta \geq 0: A\subseteq B+\delta B[0,1]\;\; \mbox{and} \;\; B\subseteq A+\delta B[0,1]\}.
$$ 
\end{theorem}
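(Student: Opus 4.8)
The plan is to realize both compact sets as curves parametrized over $[0,1]$ and then transport the uniform convergence of Theorem~\ref{teoimp} into a Hausdorff estimate, up to a mesh--fineness error. The first step I would carry out is the identification
$$
\mbox{gr}\,\mathbb{X}_{\mu}=\{(\theta(s),y^{\eta}(s)):s\in[0,1]\},
$$
where $(\theta,\phi)$ is the graph completion of $\mu$ and $y^{\eta}$ solves \eqref{defy.}. Indeed, for $s\notin\cup_{i}I_{i}$ one has $y^{\eta}(s)=x(\theta(s))$ by \eqref{defy}, so $(\theta(s),y^{\eta}(s))$ runs over $\{(t,x(t)):t\in[0,T]\}$ as $s$ does; and for $s\in I_{i}$ one has $\theta(s)=t_{i}$ and $y^{\eta}(s)=\mathcal{X}_{t_{i}}(\alpha_{t_{i}}(s))=y(s)$, so those points are exactly $\{(t_{i},y(s)):s\in I_{i}\}$. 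Taking the union reproduces the definition of $\mbox{gr}\,\mathbb{X}_{\mu}$. In the same spirit, $\Lambda^{N}$ is precisely the set of nodes of the polygonal curve $G_{N}:=\{(\theta_{N}(s),y^{\eta_{N}}_{N}(s)):s\in[0,1]\}$, because $\theta_{N}(\cdot)$ (by its definition in $\mathcal{P}_{N}$) and $y^{\eta_{N}}_{N}(\cdot)$ (by \eqref{polig}) are the affine interpolants of their nodal values $t_{k}=\theta_{N}(s_{k})$ and $y_{k}=y^{\eta_{N}}_{N}(s_{k})$.

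Next I would gather the two convergences I need, along the subsequence $\mathcal{K}\subset\mathcal{N}$ supplied by Theorem~\ref{teoimp}. On the one hand, $y^{\eta_{N}}_{N}\to y^{\eta}$ uniformly on $[0,1]$ for $N\in\mathcal{K}$; moreover the proof of Theorem~\ref{teoimp} shows, via \eqref{equ}, Lemma~\ref{lim}, and $|\xi^{0}_{N}|\le\overline{M}$, that the polygonal arcs $y^{\eta_{N}}_{N}(\cdot)$ are Lipschitz with one common rank $\kappa>0$. On the other hand, $d(\eta_{N},\eta)\to0$ contains the summand $\int_{0}^{1}|\dot{\theta}_{N}(s)-\dot{\theta}(s)|\,ds$ coming from $d_{5}$, and since $\theta_{N}(0)=\theta(0)=0$ we obtain $\sup_{s\in[0,1]}|\theta_{N}(s)-\theta(s)|\le\int_{0}^{1}|\dot{\theta}_{N}-\dot{\theta}|\to0$. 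Setting $\varepsilon_{N}:=\sup_{s}|\theta_{N}(s)-\theta(s)|+\sup_{s}|y^{\eta_{N}}_{N}(s)-y^{\eta}(s)|$, we have $\varepsilon_{N}\to0$ $(N\in\mathcal{K})$, and every point of $G_{N}$ is within $\varepsilon_{N}$ of the point of $\mbox{gr}\,\mathbb{X}_{\mu}$ with the same parameter $s$, and conversely; so $\mbox{dist}_{H}(G_{N},\mbox{gr}\,\mathbb{X}_{\mu})\le\varepsilon_{N}$.

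It then remains to pass from the polygonal curve $G_{N}$ to its finite node set $\Lambda^{N}$. One inclusion is free: $\Lambda^{N}\subset G_{N}\subseteq\mbox{gr}\,\mathbb{X}_{\mu}+\varepsilon_{N}B[0,1]$. For the other, given $s\in[0,1]$ choose a node $s_{k}$ with $|s-s_{k}|\le1/N$; using the uniform Lipschitz rank $b$ of $\theta_{N}(\cdot)$ and $\kappa$ of $y^{\eta_{N}}_{N}(\cdot)$,
$$
|\theta(s)-\theta_{N}(s_{k})|\le\varepsilon_{N}+b/N,\qquad
|y^{\eta}(s)-y^{\eta_{N}}_{N}(s_{k})|\le\varepsilon_{N}+\kappa/N,
$$
so $(\theta(s),y^{\eta}(s))$ lies within $\delta_{N}:=\varepsilon_{N}+(b+\kappa)/N$ of $\Lambda^{N}$. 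Hence $\mbox{gr}\,\mathbb{X}_{\mu}\subseteq\Lambda^{N}+\delta_{N}B[0,1]$ as well, and therefore $\mbox{dist}_{H}(\Lambda^{N},\mbox{gr}\,\mathbb{X}_{\mu})\le\delta_{N}\to0$ as $N\to\infty$, $N\in\mathcal{K}$. Compactness causes no trouble: $\Lambda^{N}$ is finite and $s\mapsto(\theta(s),y^{\eta}(s))$ is continuous on the compact interval $[0,1]$ because $\theta$ and $y^{\eta}$ are Lipschitz.

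The only genuinely delicate step is the first one: matching the abstract object $\mbox{gr}\,\mathbb{X}_{\mu}$, written as the union of the time graph of $x(\cdot)$ and of the atomic graphs, with the single parametrized arc $\{(\theta(s),y^{\eta}(s))\}$ --- that is, keeping precise track of how $\pi$, $\theta$ and the atomic intervals $I_{i}$ interlock. Once this bookkeeping is in place, the remainder is just the uniform convergence already established in Theorem~\ref{teoimp} (and Observation~\ref{obs5}) together with the elementary estimate above, so no new analytic difficulty arises.
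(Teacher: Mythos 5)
Your proof is correct and follows essentially the same route as the paper: both reduce the claim to the uniform convergence $y^{\eta_N}_N\to y^{\eta}$ supplied by Theorem~\ref{teoimp}/Observation~\ref{obs5}, a mesh-fineness estimate comparing the node set to the full polygonal arc, and the Lipschitz bound on $\theta_N$ to transfer the first coordinate from $s$ to $t$. If anything, your treatment of the time coordinate is the more careful one --- where the paper compresses this step into the inequality $\mbox{dist}_H(\Lambda^N,\mbox{gr}\mathbb{X}_{\mu})\leq b\,\mbox{dist}_H(\tilde{\Lambda}^N,\mbox{gr}\,y)$, you make explicit that one also needs $\sup_s|\theta_N(s)-\theta(s)|\to 0$, which you correctly extract from the $d_5$ component of the metric.
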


\begin{proof}
For each $N \in \mathcal{N}$, define
$$
\tilde{\Lambda}^N:=\{(s_k,y_k):k=0,...,N\}.
$$
Note that as $\eta_N \rightarrow^d \eta$, by Observation \ref{obs5}, there exists $\mathcal{K}\subset \mathcal{N}$ so that $y^{\eta_N}_N\rightarrow y^{\eta}$ uniformly when $N \in \mathcal{K}$, then we have
$$
\mbox{dist}_H(\mbox{gr}y^{\eta_N}_N,\mbox{gr}y^{\eta}) \rightarrow 0 \;\; \mbox{as} \; N \rightarrow \infty, N \in \mathcal{K}.
$$
Observe the second coordinates of $\Lambda^N$ and $\tilde{\Lambda}^N$ are equal for each $k=0,...,N$. In the same way, the second coordinates of gr$\mathbb{X}_{\mu}$ and gr$y$ are the same for each $t \notin \Theta$, and when $t \in \Theta$, the set of projections onto the second coordinate are the same. Then, we have the following:
$$
\mbox{dist}_H(\Lambda^N,\mbox{gr}\mathbb{X}_{\mu})\leq b\mbox{dist}_H(\tilde{\Lambda}^N,\mbox{gr}y),
$$
where $b$ is the Lipschitz constant of $\theta_N(\cdot)$.
We can get
\[\begin{array}{lll}
\mbox{dist}_H(\tilde{\Lambda}^N,\mbox{gr}y)\leq \mbox{dist}_H(\tilde{\Lambda}^N,\mbox{gr}y^{\eta_N}_N)+\mbox{dist}_H(\mbox{gr}y^{\eta_N}_N,\mbox{gr}y).
\end{array}\]
Passing the limit when $N \in \mathcal{K}$ in the last inequality we get the desired result.
\end{proof}

\section{Bounds on Approximation Errors}\label{section7}
In this section we give a bound on approximation errors between the linear interpolation of the sequence of Euler points and a trajectory of the reparametrized problem and also between the objective function of the approximated problem and the reparametrized problem.

\begin{obs}\label{obs1}
Note that Picard Lemma 5.6.3, \cite{Polak}, holds if we define $h:\R^n\times \R^m\times \R^{2q+1} \rightarrow \R^n$ as 
$$
h(x,u,\Omega)=f(x,u)\dot{\theta}(s)+g(x)\dot{\phi}(s) \;\; \mbox{a.e.} \; s \in [0,1],
$$
since $h(\cdot,\cdot,\cdot)$ is Lipschitz related to the first variable. Let $x,y \in \R^n$, $u \in \R^m$ and $\Omega \in \R^{2q+1}$, then
$$
|h(x,u,\Omega)-h(y,u,\Omega)|\leq |f(x,u)-f(y,u)||\dot{\theta}(s)|+||g(x)-g(y)||\dot{\phi}(s)| \leq (bK^{'}+rK^{''})|x-y|,
$$  
since $f$ and $g$ are Lipschitz in the first variable.
\end{obs}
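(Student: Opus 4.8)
The plan is to verify the single hypothesis that Picard Lemma 5.6.3 of \cite{Polak} demands, namely that the reparametrized right-hand side $h(x,u,\Omega)=f(x,u)\dot\theta(s)+g(x)\dot\phi(s)$ is globally Lipschitz in its state variable with a constant that does not depend on the remaining data. First I would fix $u\in\R^m$ and $\Omega\in\R^{2q+1}$ (which encodes the admissible values $\dot\theta(s)$ and $\dot\phi(s)$ at a given $s$) and write the difference
$$
h(x,u,\Omega)-h(y,u,\Omega)=[f(x,u)-f(y,u)]\dot\theta(s)+[g(x)-g(y)]\dot\phi(s),
$$
so that all the dependence on the state collapses into $f$ and $g$ alone.

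Next I would apply the triangle inequality together with submultiplicativity of the matrix norm to obtain
$$
|h(x,u,\Omega)-h(y,u,\Omega)|\leq |f(x,u)-f(y,u)|\,|\dot\theta(s)|+\|g(x)-g(y)\|\,|\dot\phi(s)|.
$$
The state-Lipschitz estimates in Assumption \ref{assump} a), read with the control argument held fixed, give $|f(x,u)-f(y,u)|\leq K^{'}|x-y|$ and $\|g(x)-g(y)\|\leq K^{''}|x-y|$, while the statement recorded just before Definition \ref{def1} that $\theta(\cdot)$ and $\phi(\cdot)$ are Lipschitz of ranks $b$ and $r$ yields the almost-everywhere bounds $|\dot\theta(s)|\leq b$ and $|\dot\phi(s)|\leq r$. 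Multiplying and adding these four estimates produces
$$
|h(x,u,\Omega)-h(y,u,\Omega)|\leq (bK^{'}+rK^{''})|x-y|,
$$
which is exactly the asserted inequality.

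Finally I would emphasise that the constant $bK^{'}+rK^{''}$ is independent of $u$, of the reparametrization data packaged in $\Omega$, and of $s\in[0,1]$, since $K^{'}$, $K^{''}$, $b$, and $r$ are all fixed. This uniformity is precisely the structural hypothesis under which Picard Lemma 5.6.3 of \cite{Polak} is stated, so that result transfers verbatim to $h$. I expect no genuine obstacle here; the only point requiring care is to hold the control argument fixed when invoking the Lipschitz bound for $f$, so that the term $|u-\hat u|$ appearing in Assumption \ref{assump} a) drops out and does not contaminate the state-Lipschitz constant.
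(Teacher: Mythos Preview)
Your proposal is correct and follows essentially the same approach as the paper: write the difference $h(x,u,\Omega)-h(y,u,\Omega)$, apply the triangle inequality, invoke the Lipschitz bounds $K'$, $K''$ on $f$ and $g$ from Assumption~\ref{assump} a), and use the Lipschitz ranks $b$, $r$ of $\theta$ and $\phi$ to bound $|\dot\theta|$ and $|\dot\phi|$. The paper's argument is compressed into the two displayed lines of the observation itself, while yours spells out the same steps more carefully and adds the useful remark that the resulting constant is uniform in $u$, $\Omega$, and $s$.
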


\begin{theorem}
Suppose that Assumption 1 holds, $N \in \mathcal{N}$, and $S \subset B$ is a bounded set. Then there exists a constant $K_{\xi^0}>0$ such that, for any $\eta=(\xi^0,u,\Omega)$ and $\hat{\eta}=(\hat{\xi}^0,u,\Omega) \in S\cap S_N$, which differ only in the initial state,
$$
|y_N^{\hat{\eta}}(s)-y^{\eta}(s)|\leq K_{\xi^0}(|\xi^0-\hat{\xi}^0|+1/N),
$$
where $y_N^{\hat{\eta}}(\cdot)$ is the linear interpolation between the sequence of points we got in the Euler discretization of the reparametrized system.
\end{theorem}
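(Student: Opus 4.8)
The plan is to compare the exact solution $y^{\eta}(\cdot)$ of the reparametrized system \eqref{defy.} with the linear interpolation $y_N^{\hat\eta}(\cdot)$ of the Euler points, splitting the error into two pieces: the error coming from the different initial conditions $\xi^0$ versus $\hat\xi^0$, and the error coming from the Euler discretization itself (one-step consistency error plus accumulation). Since $\eta$ and $\hat\eta$ share the same control $u$ and the same impulsive control $\Omega$ (hence the same graph completion $(\theta,\phi)$ and the same $\theta_N,\phi_N$ on $S_N$), the only independent sources of discrepancy are $|\xi^0-\hat\xi^0|$ and the step size $h=1/N$. First I would introduce the exact solution $y^{\hat\eta}(\cdot)$ of \eqref{defy.} started at $\hat\xi^0$ as an intermediate comparison point, and write
$$
|y_N^{\hat\eta}(s)-y^{\eta}(s)|\leq |y_N^{\hat\eta}(s)-y^{\hat\eta}(s)|+|y^{\hat\eta}(s)-y^{\eta}(s)|.
$$

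For the second term, I would use Assumption \ref{assump} together with the Lipschitz bounds $\|\dot\theta\|_\infty\leq b$, $\|\dot\phi\|_\infty\leq r$ exactly as in the proof of Theorem \ref{unicidade}: subtract the integral equations for $y^{\hat\eta}$ and $y^{\eta}$, use the Lipschitz constants $K',K''$ of $f$ and $g$, and apply Bellman–Gronwall to obtain $|y^{\hat\eta}(s)-y^{\eta}(s)|\leq e^{(K'b+K''r)}|\xi^0-\hat\xi^0|$ for all $s\in[0,1]$. For the first term, I would invoke the discrete/continuous comparison already available in the paper: by Observation \ref{obs1} the right-hand side $h(x,u,\Omega)=f(x,u)\dot\theta(s)+g(x)\dot\phi(s)$ is Lipschitz of rank $bK'+rK''$ in $x$, so Picard's Lemma 5.6.3 of \cite{Polak} applies to the Euler scheme \eqref{discy}–\eqref{polig} with the same initial condition $\hat\xi^0$; this yields $|y_N^{\hat\eta}(s)-y^{\hat\eta}(s)|\leq c\,h = c/N$ for a constant $c$ depending only on the bounds on $S$, the Lipschitz rank, and the bounds on $\dot\theta_N,\dot\phi_N$ (which are uniform in $N$ by properties i)–ii) of $\mathcal{P}_N$ and Lemma \ref{lim}, since $S$ is bounded so $|\xi^0|$ is bounded there). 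Combining the two estimates gives the claim with $K_{\xi^0}:=\max\{e^{K'b+K''r},\,c\}$.

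The main obstacle I anticipate is the first term: making the constant $c$ in the Euler error $|y_N^{\hat\eta}(s)-y^{\hat\eta}(s)|\leq c/N$ genuinely uniform over $S\cap S_N$ and independent of $N$. This requires (a) a uniform bound on the Euler iterates $y_N^{\hat\eta}(s_k)$, which comes from Lemma \ref{lim} together with the boundedness of $S$; (b) a uniform bound on $\|\dot\theta_N\|_\infty$ and $\limsup_N\|\dot\phi_N\|_\infty$, which are exactly properties i) and ii) recorded for $\mathcal{P}_N$; and (c) control of the one-step consistency error $\big|f(y_N(s_k),u(s_k))(\theta_N(s_{k+1})-\theta_N(s_k))+g(y_N(s_k))(\phi_N(s_{k+1})-\phi_N(s_k)) - \int_{s_k}^{s_{k+1}} h(y^{\hat\eta}(\sigma),u(\sigma),\Omega)\,d\sigma\big|$, where one must be slightly careful because $\phi_N$ need not be differentiable and $u$ is only $L_\infty$; here one uses that on $S_N$ the control $u=u_N$ is the fixed piecewise-constant interpolant, so $u(s_k)=u_N(\sigma)$ for $\sigma\in[s_k,s_{k+1})$, and that $\phi_N=F_N\circ\theta_N$ is piecewise affine, so the increments are controlled by $h$ times the uniform bounds. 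Once these uniform bounds are in place, Picard's Lemma applies verbatim and the accumulation of local errors is handled by the discrete Gronwall inequality, exactly as in the non-impulsive case treated in \cite{DH:2001}.
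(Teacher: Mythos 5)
Your proposal is correct and matches the paper's proof in all essentials: both arguments rest on Polak's Picard Lemma (made applicable through Observation \ref{obs1}), the uniform bound on the Euler iterates from Lemma \ref{lim}, and the Lipschitz and linear-growth constants of Assumption \ref{assump} to control the accumulated consistency defect by $O(1/N)$. The only difference is in the bookkeeping: you triangulate through the exact solution $y^{\hat{\eta}}$ started at $\hat{\xi}^0$ and handle the initial-condition gap by a separate Bellman--Gronwall estimate, whereas the paper applies the Picard Lemma once, directly to $y^{\eta}$ versus $y_N^{\hat{\eta}}$, which absorbs the term $|\xi^0-\hat{\xi}^0|$ and the integrated defect $e(y_N^{\hat{\eta}},\eta)$ into a single inequality before splitting the defect into the four terms $I$--$IV$.
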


\begin{proof}
Suppose that $N \in \mathcal{N}$, $\eta$ and $\hat{\eta}$ are given and $y^{\eta}(\cdot)$ and $y_N^{\hat{\eta}}(\cdot)$ are as above. Because of Picard Lemma we have
$$
|y_N^{\hat{\eta}}(s)-y^{\eta}(s)|\leq |\xi^0-\hat{\xi}^0|+\int_0^1|\dot{y}_N^{\hat{\eta}}(s)-h(y_N^{\hat{\eta}}(s),u(s),\Omega)|ds=:|\xi^0-\hat{\xi}^0| + e(y_N^{\hat{\eta}},\eta),
$$
where $h$ is defined as in Observation \ref{obs1}.

If we substitute the expressions of them we get
$$
e(y_N^{\hat{\eta}},\eta) \leq \sum_{k=0}^{N-1}\int_{s_k}^{s_{k+1}}\left[|f(y_N^{\hat{\eta}}(s_k),u(s_k))-f(y_N^{\hat{\eta}}(s),u(s_k))|\left|\frac{\theta(s_{k+1})-\theta(s_k)}{h}\right| \right]ds
$$
$$
+\sum_{k=0}^{N-1}\int_{s_k}^{s_{k+1}}|f(y_N^{\hat{\eta}}(s),u(s_k))|\left|\dot{\theta}(s)-\frac{\theta(s_{k+1})-\theta(s_k)}{h} \right|ds
$$
$$
+\sum_{k=0}^{N-1}\int_{s_k}^{s_{k+1}}\left[|g(y_N^{\hat{\eta}}(s_k))-g(y_N^{\hat{\eta}}(s))|\left|\frac{\phi(s_{k+1})-\phi(s_k)}{h}\right| \right]ds
$$
$$
+\sum_{k=0}^{N-1}\int_{s_k}^{s_{k+1}}|g(y_N^{\hat{\eta}}(s))|\left|\dot{\phi}(s)-\frac{\phi(s_{k+1})-\phi(s_k)}{h} \right|=:I+II+III+IV.
$$
We need to bound $I,II,III$ and $IV$.\\
For $I+IV$, as $f(\cdot,\cdot)$ is Lipschitz of rank $K^{'}$, $g(\cdot)$ is Lipschitz of rank $K^{''}$, $\theta(\cdot)$ is Lipschitz of rank $b$, and $\phi(\cdot)$ is Lipschitz of rank $r$, we have
$$
I+IV \leq \sum_{k=0}^{N-1}\int_{s_k}^{s_{k+1}} (bK^{'}+rK^{''})|y_N^{\hat{\eta}}(s_k)-y_N^{\hat{\eta}}(s)|ds.
$$
If we substitute the expression of $y_N^{\hat{\eta}}(s)$ and define $\rho:=bK^{'}+rK^{''}$,
$$
I+IV\leq \rho\sum_{k=0}^{N-1}\int_{s_k}^{s_{k+1}}|N(s-s_k)(y_N^{\hat{\eta}}(s_{k+1})-y_N^{\hat{\eta}}(s_k)|ds,
$$
that is,
$$
I+IV\leq \rho\sum_{k=0}^{N-1}\int_{s_k}^{s_{k+1}}N\beta(s-s_k)|hK_1(b+r)(|y_N^{\hat{\eta}}(s_{k})|+1)|ds.
$$
In the last inequality we just substituted the expression of $y_N^{\hat{\eta}}(s_{k+1})-y_N^{\hat{\eta}}(s_k)$ given by the Euler discretization, and we used the fact that $f(\cdot,\cdot)$ and $g(\cdot)$ have linear growth in the first variable.

Integrating,
$$
I+IV\leq \rho\beta\sum_{k=0}^{N-1}\frac{1}{2N^2}(|y_N^{\hat{\eta}}(s_{k})|+1) \leq \frac{\rho\beta \hat{K}_{\xi^0}}{2N},
$$
where $\hat{K}_{\xi^0}:=\sup\{|\hat{\xi}^0|+1: (\hat{\xi}^0,u,\Omega) \in S\}$, $\beta:=K_1(b+r)$ and in the last inequality we used Lemma (tese).

Now, define $\Phi_N:[0,1]\rightarrow \R$ as
$$
\Phi_N(s):= \max\left\{\left|\dot{\theta}(s)-\frac{\theta(s_{k+1})-\theta(s_k)}{h} \right|,\left|\dot{\phi}(s)-\frac{\phi(s_{k+1})-\phi(s_k)}{h}\right|\right\}.
$$
We know $\Phi_N(s) \rightarrow 0$ as $N \rightarrow \infty$. Then, given $\varepsilon=1/N$, there exists $N_0 \in \mathcal{N}$ so that for all $N_1 \geq N_0$ we have
\begin{equation}\label{31}
|\Phi_{N_1}(s)|\leq 1/N.
\end{equation}
If $N \geq N_0$, inequality \eqref{31} holds. If $N<N_0$ then define
$$
M_1:= \max_{n\in\{0,...,N_0\}}\{\Phi_n(s)\}.
$$
We have,
\begin{equation}\label{32}
|\Phi_N(s)|\leq M_1\leq \frac{M_1N_0}{N}=:\frac{M}{N}.
\end{equation}
For $II+III$, as $f(\cdot,\cdot)$ and $g(\cdot)$ have linear growth in the first variable, 
$$
II+III\leq \sum_{k=0}^{N-1}\int_{s_k}^{s_{k+1}}2K_1(|y_N^{\hat{\eta}}(s)|+1)\Phi_N(s)ds\leq \sum_{k=0}^{N-1}\int_{s_k}^{s_{k+1}}2K_1(\beta +1)e^{\beta}(1+|\hat{\xi}^0|)\Phi_N(s)ds,
$$
where in last inequality we substituted the expression of $y_N^{\hat{\eta}}(s)$ and used Lemma (tese). Then, 
$$
II+III\leq 2K_1(\beta+1)e^{\beta}(1+|\hat{\xi}^0|)\int_0^1\Phi_N(s)ds \leq \frac{2K_1(\beta+1)e^{\beta}\hat{K}_{\xi^0}M}{N},
$$
because of inequality \eqref{32}.

Now, define 
$$
K_{\xi^0}:= \max\{1,\frac{\rho\beta \hat{K}_{\xi^0}}{2},2K_1(\beta+1)e^{\beta}\hat{K}_{\xi^0}M\},
$$
then we have the result
$$
|y_N^{\hat{\eta}}(s)-y^{\eta}(s)|\leq K_{\xi^0}(|\xi^0-\hat{\xi}^0|+1/N).
$$
\end{proof}

The following theorem is given us the error between the objective function.

\begin{theorem}
Suppose that Assumption 1 holds. Then, for every bounded subset $S\subset B$, there exists a constant $K_S>0$ so that for any $\eta \in S \cap S_N$ and $k=0,...,N$,
$$
|f^0(\xi^0,y^{\eta}(s_k))-f^0(\xi^0,y_N^{\eta}(s_k))|\leq K_S/N,
$$
where $y^{\eta}(\cdot)$ is the solution of the reparametrized system and $y_N^{\eta}(\cdot)$ is the linear interpolation of the points we got in that Euler discretization.
\end{theorem}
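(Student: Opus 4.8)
The plan is to reduce the estimate to the previous theorem by specialising it to the case in which the two impulsive processes coincide, and then to invoke the Lipschitz continuity of $f^0$ guaranteed by Assumption \ref{assump}(b).

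First I would note that the constant $K_{\xi^0}$ produced in the previous theorem depends on $S$ only through $\hat K_{\xi^0}=\sup\{|\hat\xi^0|+1:(\hat\xi^0,u,\Omega)\in S\}$, which is finite because $S$ is bounded; hence $K_{\xi^0}$ may be taken as a single constant $\tilde K_S$ depending only on $S$ and on the problem data. Applying the previous theorem to $\eta=\hat\eta=(\xi^0,u,\Omega)$ --- a legitimate choice, since the two processes are then allowed to agree in every component, in particular in the initial state --- yields, for every $\eta\in S\cap S_N$,
$$
|y_N^{\eta}(s)-y^{\eta}(s)|\leq \frac{\tilde K_S}{N}\qquad\forall\, s\in[0,1],
$$
and in particular at each grid point $s=s_k$, $k=0,\dots,N$.

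Next I would use that, by Lemma \ref{lim} together with the analogous Bellman--Gronwall estimate for the continuous reparametrized system, all the values $y^{\eta}(s_k)$ and $y_N^{\eta}(s_k)$ lie in a fixed ball whose radius depends only on $S$ (through the bound on $|\xi^0|$) and on $\beta=K_1(b+r)$. On this ball $f^0(\cdot,\cdot)$ is Lipschitz by Assumption \ref{assump}(b); let $L_0$ be such a Lipschitz constant. Since the first argument $\xi^0$ is the same in both terms,
$$
|f^0(\xi^0,y^{\eta}(s_k))-f^0(\xi^0,y_N^{\eta}(s_k))|\leq L_0\,|y^{\eta}(s_k)-y_N^{\eta}(s_k)|\leq \frac{L_0\tilde K_S}{N},
$$
so the claim follows with $K_S:=L_0\tilde K_S$.

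I do not expect a genuine obstacle here: the real work --- controlling the Euler consistency error against the exact reparametrized trajectory and keeping all constants uniform over the bounded set $S$ --- was already carried out in the preceding theorem. The only steps needing a word of care are checking that the specialisation $\hat\eta=\eta$ is admissible and still produces the $O(1/N)$ rate with an $S$-uniform constant, and confirming that a single Lipschitz modulus for $f^0$ suffices, which is immediate once Lemma \ref{lim} confines the trajectories to a bounded set.
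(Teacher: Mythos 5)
Your proposal is correct and follows essentially the same route as the paper: specialise the preceding error bound to $\hat\eta=\eta$ so that the $|\xi^0-\hat\xi^0|$ term vanishes, leaving $|y^{\eta}(s_k)-y_N^{\eta}(s_k)|\leq K_{\xi^0}/N$, and then apply the Lipschitz continuity of $f^0$ from Assumption \ref{assump}(b). The paper's own proof is exactly this two-line argument; your additional remarks on the $S$-uniformity of the constants and of the Lipschitz modulus are sensible but not treated any more explicitly in the paper.
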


\begin{proof}
As $f^0(\cdot,\cdot)$ is Lipschitz of rank $L_1$ we have
\[\begin{array}{lll}
|f^0(\xi^0,y^{\eta}(s_k))-f^0(\xi^0,y_N^{\eta}(s_k))|\leq L_1|y^{\eta}(s_k)-y_N^{\eta}(s_k)|
\leq L_1K_{\xi^0}/N=:K_S/N,
\end{array}\]
where in the last inequality we used the previous theorem.
\end{proof}

\section{Conclusions}
We study an impulsive optimal control problem in which we show we can get approximated problems to the reparametrized problem by Euler's discretization and if the sequence of approximated problems converge, it will converge to a solution of the reparametrized problem. We also show that a subsequence of discrete-time approximated functions graph-converges to a solution.  The results obtained come from a mix of ideas from consistent approximations given by \cite{Polak} and  Euler approximation and graph-convergence for impulsive differential inclusion given by \cite{Peter}. We are contributing with the literature about numerical methods for impulsive optimal control problems. 

\section{Acknowledgment}

Grant 2011/14121-9, 2014/05558-2 and 2013/07375-0, S\~ao Paulo Research Foundation (FAPESP).

\bibliographystyle{siam} 
\bibliography{exemplo}%

\end{document}